\documentclass{article}
\usepackage[a4paper,margin=2.5cm]{geometry}

\usepackage{graphicx} 
\usepackage{amsthm}
\usepackage{amsmath,amssymb,amsfonts}
\usepackage{nicematrix}
\usepackage[export]{adjustbox}
\usepackage{float}
\usepackage{algpseudocode}
\usepackage{relsize}
\usepackage[toc,page]{appendix}
\usepackage{algpseudocodex}
\usepackage{algorithm}
\usepackage{comment}
\usepackage{nicefrac}
\usepackage{algorithmicx}
\usepackage{stmaryrd}
\usepackage{caption}
\usepackage[normalem]{ulem}
\usepackage{xspace}
\usepackage{hyperref}

\newtheorem{thm}{Theorem}

\newtheorem{prop}[thm]{Proposition}
\newtheorem{lemma}[thm]{Lemma}

\newcommand{\Tr}{\mathrm{tr}}

\newcommand{\E}{\mathbb{E}}
\newcommand{\lam}{\lambda}

\newcommand{\R}{\mathbb{R}}

\renewcommand{\P}{\mathbb{P}}
\newcommand{\PP}{\mathcal{P}}
\newcommand{\EE}{\mathbb{E}}
\newcommand{\ex}{\mathbb{E}}
\renewcommand{\H}{\mathcal{H}}
\newcommand{\A}{\mathcal{A}}

 \NiceMatrixOptions{code-for-first-row =\scriptstyle,code-for-first-col=\scriptstyle}
\newcommand{\imag}{\text{Imag}}

\newcommand{\tr}{\text{tr}}
\newcommand{\eps}{\varepsilon}
\newcommand{\bigO}{\mathcal{O}}
\algdef{SE}[DOWHILE]{Do}{doWhile}{\algorithmicdo}[1]{\algorithmicwhile\ #1}%
\newcommand{\chebfun}{\texttt{chebfun}\xspace}
\newcommand{\ii}{\mathsf{i}}
\newcommand{\num}{\mathsf{num}}

\newcommand{\mb}[1]{\left[\begin{array}{#1}}
\newcommand{\me}{\end{array}\right]}
\newcommand{\smb}{\left[\begin{smallmatrix}}
\newcommand{\sme}{\end{smallmatrix}\right]}

\usepackage[english]{babel}
\usepackage[fixlanguage]{babelbib}

\title{Stochastic trace estimation for positive trace-class operators}

\author{
Zvonimir Bujanovi\'c\thanks{University of Zagreb, Faculty of Science, Department of Mathematics, Croatia. \texttt{zbujanov@math.hr}}
\and
Luka Grubi\v{s}i\'c\thanks{University of Zagreb, Faculty of Science, Department of Mathematics, Croatia. \texttt{luka.grubisic@math.hr}}
\and
Daniel Kressner\thanks{Institute of Mathematics, EPFL, Switzerland. \texttt{daniel.kressner@epfl.ch}}
\and
Hrvoje Oli\'c\thanks{University of Zagreb, Faculty of Science, Department of Mathematics, Croatia. \texttt{hrvoje.olic@math.hr}}
}

\begin{document}
\maketitle

\begin{abstract}
    Implicit trace estimation aims to approximate the trace of a matrix or linear operator accessible only through matrix-vector or operator-vector products. In the matrix setting, the Girard--Hutchinson estimator typically requires $\mathcal{O}(\varepsilon^{-2})$ products to achieve accuracy $\varepsilon$, while the variance-reduced Hutch++ algorithm reduces this sample complexity
    to $\mathcal{O}(\varepsilon^{-1})$ for positive semidefinite matrices. We develop infinite-dimensional analogues of these estimators for positive trace-class operators on separable Hilbert spaces. The idealized estimators use Gaussian random elements whose covariance is determined by the target operator, leading to unbiased operator versions of Girard--Hutchinson and Hutch++. We prove high-probability error bounds analogous to the finite-dimensional matrix results; in particular, idealized infHutch++ achieves $\mathcal{O}(\varepsilon^{-1})$ sample complexity.
    For practical computation, we introduce truncated implementations that restrict
the random samples to finite-dimensional subspaces; for fixed sample budget, we
show that truncated infHutch++ converges in distribution to its idealized
counterpart as the truncation dimension tends to infinity.
Numerical experiments with integral operators, density-of-states approximations, and spectral filtering for a radial Dirac operator show that these truncated estimators can achieve accuracy comparable to the ContHutch++ algorithm by Zvonek, Horning \& Townsend while using lower-degree function representations and smaller internal discretizations in \chebfun.
\end{abstract}

\section{Introduction}
\label{sec:intro}

Estimating the trace $\tr(A) = a_{11} + \cdots + a_{nn}$ of a symmetric $n \times n$ matrix $A$ that is accessible only through matrix-vector products $x \mapsto Ax$ is a common task in computational mathematics. Known as \emph{implicit} or \emph{matrix-free} trace estimation, this problem arises in many large-scale applications, including methods for approximating spectral densities (and counting eigenvalues), log-determinants, and network measures; see~\cite{cortinovis2021randomizedtraceestimatesindefinite, lin2017randomized, lin2016approximating,  meyer2021hutchoptimalstochastictrace, ubaru2017fast, weisse2006kernel} and the references therein. In principle, the trace could simply be obtained from computing the $n$ matrix-vector products $A e_i$ for all unit vectors $e_1,\ldots,e_n$, followed by forming and summing up $a_{ii} = e_i^T A e_i$. The Girard--Hutchinson trace  estimator~\cite{girard89,hutchinson1989} aims to estimate $\tr(A)$ using far fewer matrix-vector products. It uses $m \ll n$ i.i.d. isotropic random vectors $z_1,\ldots, z_m$ to form 
$$
    \tr(A) \approx H_m(A) = \frac{1}{m}\sum_{i=1}^m z_i^T A z_i.
$$
Gaussian and Rademacher random vectors are the most common choices. 
Their stochastic error is well understood through nonasymptotic concentration bounds~\cite{cortinovis2021randomizedtraceestimatesindefinite,Roosta_Khorasani_2014}, yielding
the characteristic Monte Carlo rate $\bigO(m^{-1/2})$  or $\bigO(\varepsilon^{-2})$
samples for accuracy $\varepsilon$ at fixed failure probability.
Modifications that improve the sample complexity of this basic estimator have been developed~\cite{Epperly_2024, meyer2021hutchoptimalstochastictrace, Persson2022}. In particular, the Hutch++ algorithm \cite{meyer2021hutchoptimalstochastictrace} combines the power of randomized low-rank approximation~\cite{HalkoMartinssonTropp11} with stochastic trace estimation.
For positive semidefinite matrices, Hutch++ achieves relative error $\eps$ with fixed failure probability using $\mathcal O(\eps^{-1})$ matrix-vector products. 

\paragraph{Contributions.}
This work is concerned with the infinite-dimensional analogue: Stochastic trace estimation for a positive trace-class operator $\A$ defined on a separable Hilbert space $\H$. In Section~\ref{sec:optrace}, we first develop an \emph{idealized} Girard--Hutchinson-type estimator for $\A$ that shares the desirable properties of the estimator for the matrix case. In particular, it does not introduce any bias; its expected value is $\tr(\A)$. The combination with the randomized SVD for operators from~\cite{kressner2025randomizedsvdinfinitedimensions} then yields an \emph{idealized} analogue of Hutch++. Theorem~\ref{thm:main} shows this operator variant requires $\bigO(\varepsilon^{-1})$ samples,
matching the $\varepsilon$-dependence of matrix Hutch++.
However, these idealized algorithms are not practical, since one usually
cannot sample directly from the operator-dependent Gaussian distributions they
require. We therefore introduce basis-truncated implementations that restrict
the random inputs to finite-dimensional subspaces of $\H$. For infHutch, the
resulting estimator is exactly Girard--Hutchinson applied to a finite-rank
compression of $\A$. For infHutch++, we show that, for fixed sample budget, the
truncated estimator converges in distribution to its idealized counterpart as
the basis dimension tends to infinity.

\paragraph{Related work.}
The generalization of Gaussian distributions to Hilbert spaces is a well-studied topic in probability theory \cite{da2006introduction}.
Unlike in finite dimensions, however, there is
no $\H$-valued standard Gaussian random element with covariance operator $I$
when $\H$ is infinite-dimensional; the covariance operator of an $\H$-valued
Gaussian random element must be trace class.
Closely related to stochastic trace estimation, Hanson--Wright-type inequalities provide deviation bounds for quadratic forms involving such random elements; see~\cite{mollenhauer2023concentrationsubgaussianvectorspositive} and the references therein. In particular, Proposition 2.2 in~\cite{mollenhauer2023concentrationsubgaussianvectorspositive} establishes a sub-Gamma property for such quadratic forms, generalizing corresponding finite-dimensional results  (see, e.g.,~\cite[Section 2.4]{boucheron2003concentration} and \cite[Lemma 4]{cortinovis2021randomizedtraceestimatesindefinite}). This result also applies in our setting; see Lemma \ref{lema:y-sum-gamma} below.

The main motivation for this work is the ContHutch++ algorithm introduced by Zvonek et al.~\cite{zvonek2024conthutchstochastictraceestimation}. This
algorithm generalizes matrix Hutch++ to positive trace-class integral operators
in a way that differs from the approach pursued here. 
For an integral operator $\A$ on a domain $\Omega$, Zvonek et al. use Gaussian random functions with a
prescribed squared-exponential covariance kernel
$$
    K_{\mathrm{SE}}(x, y) = \frac{1}{ (2\pi \ell^2)^{d/2}} \exp\left(-\frac{\|x-y\|_2^2}{2\ell^2}\right), \quad x, y \in \Omega \subset \R^d,
$$
where $\ell>0$ is a length-scale parameter. 
Drawing independent samples $g_i \sim N(0, K_{\mathrm{SE}})$, the corresponding Girard--Hutchinson-type estimator from~\cite{zvonek2024conthutchstochastictraceestimation} is given by
$$
    H_m^{K_{\mathrm{SE}}}(\A) := \frac{1}{m} \sum_{i=1}^m \langle \A g_i, g_i \rangle,
$$
which we refer to as ContHutch in the following.
Combined this estimator with the infinite-dimensional randomized SVD from~\cite{BoulleTownsend23}, using samples from the same Gaussian process, yields the ContHutch++ algorithm.

An important advantage of ContHutch++ is that the smoothness induced by the
covariance kernel allows the random samples to be represented conveniently, for example in \chebfun~\cite{chebfun}.
On the other hand, the prescribed
covariance introduces a smoothing bias: In general, the expected value of the estimator is not equal to $\tr(\A)$, and the size of the bias depends on $\ell$. The choice of $\ell$ is critical and far from straightforward. Decreasing $\ell$ reduces the smoothing bias of the covariance kernel, but increases the resolution required to represent the random samples. For
example, sampling may require a truncated Karhunen--Lo\`eve expansion or a
factorization of a discretized covariance matrix, whose cost and resolution
requirements increase as $\ell$ decreases.

Our approach avoids the drawbacks of ContHutch++ described above by replacing the externally prescribed Gaussian-process covariance with covariance operators derived from $\A$ itself. Thus, any regularity of the samples is dictated by $\A$ rather than by a separately tuned length-scale parameter.

\paragraph{Structure of the paper.}
In Section~\ref{sec:prelim}, we recall basic facts on trace-class operators and review the Girard--Hutchinson and Hutch++ estimators for matrices. In Section~\ref{sec:optrace}, we develop the corresponding idealized estimators for positive trace-class
operators on separable Hilbert spaces. We first introduce the Gaussian random elements used by infHutch, then combine this construction with
randomized low-rank approximation to obtain infHutch++, 
and analyze basis-truncated implementations, including their consistency as
the basis dimension tends to infinity.
Numerical experiments are presented in Section~\ref{sec:experiments}, comparing the resulting methods with ContHutch and ContHutch++ on integral-operator trace estimation, density-of-states approximation, and quadrature-based spectral filtering for a radial Dirac operator.

\section{Preliminaries}
\label{sec:prelim}

In this section, we recall basic concepts from operator theory needed in our work. To set the stage for stochastic trace estimation in Hilbert spaces, we also briefly recall such methods for matrices.

\subsection{Trace-class operators on a Hilbert space}

Let $\H$ be a separable, infinite-dimensional Hilbert space which, for simplicity, we assume to be real.

Consider a positive trace-class operator $\A : \H \to \H$, that is, $\A$ is bounded, self-adjoint, satisfies $\langle \A x,x \rangle \ge 0$ for all $x \in \H$, and has finite trace
\begin{equation*}
    \Tr(\A) := \sum_{i=1}^\infty \langle \A e_i,e_i\rangle,
\end{equation*}
independent of any orthonormal basis $\{e_1,e_2,...\}$ of $\H$.

A typical example is an integral operator $\A: L^2(\Omega) \to L^2(\Omega)$ on a compact domain $\Omega \subseteq \R$ with a continuous, symmetric, and positive semidefinite kernel $f:\Omega \times \Omega \to \R$:
\begin{equation}
    \label{eq:integral-operator}
    [\A u](x) = \int_\Omega f(x,y)u(y) \, dy.
\end{equation}

Every positive trace-class operator $\A$ has finite Hilbert--Schmidt (HS) norm
$
    \| \A\|_{\mathrm{HS}} := \left(\sum_{i=1}^\infty \| \A e_i\|^2 \right)^{1/2}
$
and admits the spectral decomposition
\begin{equation} \label{eq:spectraldecomp}
    \A = \sum_{i=1}^\infty \sigma_i \langle u_i, \cdot\rangle u_i,
\end{equation}
where $\sigma_1 \geq \sigma_2 \geq \ldots > 0$ are the nonzero eigenvalues and $u_1, u_2, \ldots$ are corresponding orthonormal eigenvectors.
The HS, operator and trace norms are given by
$\|\A\|_{\mathrm{HS}} = \left(\sum_{i=1}^\infty \sigma_i^2 \right)^{1/2}$, $\|\A\|=\sigma_1$ and 
$\|\A\|_{\Tr} = \tr(\A) = \sum_{i=1}^\infty \sigma_i$, respectively.
Moreover, by the Schmidt--Mirsky theorem \cite[Theorem 4.4.7]{eubanks}, a best rank-$k$ approximation of $\A$ (in either of these norms) is given by
\begin{equation*}
    \A_k = \sum_{i=1}^k \sigma_i \langle u_i, \cdot \rangle u_i,
\end{equation*}
with the error given by
\begin{equation*}
    \|\A-\A_k\|_{\mathrm{HS}} = \Big( \sum_{i=k+1}^\infty \sigma_i^2 \Big)^{1/2}, \quad \|\A-\A_k\| = \sigma_{k+1}, \quad \|\A - \A_k\|_{\Tr} = \sum_{i=k+1}^\infty \sigma_i.
\end{equation*}

\paragraph{Quasimatrices.}
To simplify notation, we will make use of quasimatrices \cite{townsend2015continuous} over $\H$ --- the infinite-dimensional analogue to ``tall and skinny'' matrices. A quasimatrix $B$ with $k$ columns contains $k$ vectors $x_1,\ldots,x_k \in \H$: 
    \begin{equation*}
    B = \begin{bmatrix}
        x_1|x_2|\dots|x_k
    \end{bmatrix}. 
    \end{equation*}
For an arbitrary operator $\A: \H \to \H$, we denote 
    \begin{equation*}
    \A B = \begin{bmatrix}
        \A x_1 |\A x_2 | ... | \A x_k
    \end{bmatrix}.
    \end{equation*}
    By viewing the quasimatrix $B$ as an operator $B : \R^k \to \H$, the following relations hold:
    \begin{equation*}
    B^* B =  \begin{bmatrix}
        \langle x_1, x_1\rangle & \dots & \langle x_1, x_k \rangle \\
        \vdots & \ddots & \vdots \\
        \langle x_k,x_1 \rangle & \dots & \langle x_k, x_k\rangle
    \end{bmatrix}, \; \; \;
    BB^* = \sum_{j=1}^k \langle x_j, \cdot \rangle  x_j.
    \end{equation*}
    
Given an orthonormal quasimatrix $Q$ (that is, $Q^*Q = I_k$), the projection of $\A$ onto the subspace spanned by the columns of $Q$ is given by
$$
QQ^*\A = \sum_{j=1}^k \langle q_j, \A(\cdot)\rangle q_j.
$$

\subsection{Girard--Hutchinson estimator}

As mentioned in the introduction, the classical 
estimator by 
Girard \cite{girard89} and Hutchinson \cite{hutchinson1989} for the trace of a symmetric $n \times n$ matrix $A$ is given by
\begin{equation} \label{eq:hutchinson}
    H_m(A) = \frac{1}{m} \sum_{i=1}^m z_i^TAz_i,
\end{equation}
where $z_i \in \R^n$ are i.i.d.~random vectors. Assuming that the vectors $z_i$ are isotropic, i.e., $\E[z_i z_i^T] = I_n$, this estimator is unbiased:
\begin{equation*}
    \E[z_i^TAz_i] = \Tr(A).
\end{equation*}
For specific distributions of $z_i$, non-asymptotic tail bounds have been derived in, e.g.,~ \cite{cortinovis2021randomizedtraceestimatesindefinite,hutchinson1989,Roosta_Khorasani_2014}. 
We use the following bound from~\cite[Theorem 1]{cortinovis2021randomizedtraceestimatesindefinite}.
\begin{thm}
    \label{thm:hutch}
    Consider $\eps > 0$,  $0 < \delta < 1$, and independent standard Gaussian random vectors $z_1,...,z_m \in \R^n$. If
    $$
        m \ge \frac{4}{\eps^2}\left( \|A\|_F^2 + \eps \|A\|_2\right)\log\left(\frac{2}{\delta}\right),
    $$ 
    then the estimator~\eqref{eq:hutchinson} satisfies the error bound
    $$
        |H_m(A) - \Tr(A)|\leq \eps
    $$ 
    with probability at least $1 - \delta$.
\end{thm}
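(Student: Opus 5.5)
Since $A$ is symmetric, I would diagonalize it as $A = U\Lambda U^T$ with $\Lambda = \mathrm{diag}(\lambda_1,\dots,\lambda_n)$. Orthogonal invariance of the standard Gaussian gives that $w_i = U^T z_i$ are again independent standard Gaussian vectors. Then
\[
m H_m(A) - m\,\Tr(A) = \sum_{i=1}^m \sum_{j=1}^n \lambda_j \bigl(w_{ij}^2 - 1\bigr),
\]
which is a weighted sum of $mn$ independent centered $\chi^2_1$ variables with weights $\lambda_j$, each repeated $m$ times. The task therefore reduces to a Bernstein/sub-Gamma tail bound for such a sum.

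The key step is the moment generating function estimate. For a standard Gaussian $g$ and $|t| < 1/2$,
\[
\log \E\, e^{t(g^2-1)} = -t - \tfrac12\log(1-2t) \le \frac{t^2}{1-2|t|}.
\]
This is the standard sub-Gamma estimate (cf.\ \cite[Section 2.4]{boucheron2003concentration}), obtained by comparing the series $\sum_{k\ge2}(2t)^k/(2k)$ term by term. Applying it with $t$ replaced by $t\lambda_j$ and summing, I would get, for $|t| < 1/(2\|A\|_2)$,
\[
\log \E\, e^{t(mH_m(A) - m\Tr(A))} \le \frac{m\|A\|_F^2\, t^2}{1 - 2|t|\,\|A\|_2}.
\]
So the centered sum is sub-Gamma on both tails with variance factor $v = 2m\|A\|_F^2$ and scale $c = 2\|A\|_2$. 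Handling negative eigenvalues is why I use $|t\lambda_j|$ throughout; this covers indefinite $A$ at no extra cost.

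The standard Chernoff optimization for sub-Gamma variables then gives, for each tail,
\[
\P\bigl(\pm(mH_m(A) - m\Tr(A)) \ge m\eps\bigr) \le \exp\!\left(-\frac{m^2\eps^2}{2(v + c\,m\eps)}\right) = \exp\!\left(-\frac{m\eps^2}{4(\|A\|_F^2 + \eps\|A\|_2)}\right).
\]
A union bound over the two tails bounds the failure probability by $2\exp(\cdot)$. Requiring this to be at most $\delta$ gives exactly the stated condition on $m$.

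The main obstacle is getting the constants right. The Chernoff step must be done with the form $\exp(-s^2/(2(v+cs)))$ rather than a cruder Bernstein form, since that is what yields the factor $4$. I would carry it out by choosing $t = s/(v + cs)$ directly. This is admissible because $ct < 1$, and the bound then follows from the MGF estimate above.
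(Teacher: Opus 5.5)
Your proof is correct and takes essentially the paper's route. The paper does not prove this statement itself but cites it from \cite[Theorem 1]{cortinovis2021randomizedtraceestimatesindefinite}; its proof of the operator analogue, Theorem~\ref{thm:hw-hutch}, uses exactly your steps: the sub-gamma bound with parameters $(2\|A\|_F^2, 2\|A\|_2)$ for a single quadratic form (Lemma~\ref{lema:y-sum-gamma}), additivity over the $m$ independent samples, and the two-sided tail bound $2\exp(-\eps^2/(2(\nu+c\eps)))$. Your constants check out, since $t=s/(v+cs)$ gives exactly $\exp(-m\eps^2/(4(\|A\|_F^2+\eps\|A\|_2)))$ per tail, and only the trivial case $A=0$ needs separate mention.
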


\subsection{Hutch++ algorithm}
The statement of Theorem \ref{thm:hutch} is characteristic for Monte Carlo methods: The required sample size increases proportional with $\eps^{-2}$ as the accuracy $\eps$ decreases, rendering such methods impractical for smaller $\eps$. In order to overcome this obstacle, Meyer et al.~\cite{meyer2021hutchoptimalstochastictrace} propose to first approximate $A$ by a low-rank matrix $\tilde{A} = Q Q^T A Q Q^T$ with an orthonormal matrix $Q \in \R^{n \times (m/3)}$.
Cyclicity of the trace gives the decomposition
\[
    \Tr(A) = \Tr(\tilde{A}) + \Tr(A-\tilde{A})
    =
    \Tr(Q^T A Q)
    +
    \Tr\big((I-QQ^T)A(I-QQ^T)\big).
\]
Hutch++ computes the first term exactly and applies the Girard--Hutchinson
estimator to the second term:
\[
    \Tr(A)
    \approx
    \Tr(Q^T A Q)
    +
    H_{m/3}\big((I-QQ^T)A(I-QQ^T)\big).
\]
Using a randomized range finder~\cite{HalkoMartinssonTropp11} for constructing $Q$
results in Algorithm \ref{alg:hutchPP}, which still uses a total of $m$ matrix-vector multiplications. With high probability, the matrix $Q$ captures a good part of the dominant eigenspaces of $A$, which in turn reduces the variance of the Girard--Hutchinson estimator. While this effect is particularly dramatic when $A$ features strong eigenvalue decay, the sample complexity is always reduced to $\bigO(\eps^{-1})$ for fixed failure probability,
as stated in the following result for a symmetric positive semidefinite (SPSD) matrix.

\begin{thm}[{\cite[Theorem 1]{meyer2021hutchoptimalstochastictrace}}]
Consider $\eps > 0$,  $0 < \delta < 1$, and an SPSD matrix $A \in \R^{n \times n}$.
Then with $m = \bigO\big(\sqrt{\log(\nicefrac{1}{\delta})}/\eps + \log{(\nicefrac{1}{\delta})} \big)$ matrix-vector products, the output $\mathrm{H}_{++}(A)$ of Algorithm \ref{alg:hutchPP} satisfies
    \begin{equation*}
        (1-\eps)\Tr(A) \leq \mathrm{H}_{++}(A) \leq (1+\eps)\Tr(A)
    \end{equation*}
    with probability at least $1-\delta$.
\end{thm}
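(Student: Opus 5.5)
The plan is to reproduce the argument of Meyer et al.\ for an SPSD matrix $A\in\R^{n\times n}$. Algorithm~\ref{alg:hutchPP} splits the $m$ products into three groups of size $m/3$. The first group is a Gaussian sketch $S$; we set $Q=\mathrm{orth}(AS)$. The second group computes $\Tr(Q^TAQ)$ exactly. The third group applies Girard--Hutchinson to $B:=(I-QQ^T)A(I-QQ^T)$. Since $\Tr(A)=\Tr(Q^TAQ)+\Tr(B)$, the only error is $|H_{m/3}(B)-\Tr(B)|$. We bound it in two steps: first conditionally on $Q$ using Theorem~\ref{thm:hutch}, then by controlling $\|B\|_F$ through a randomized low-rank approximation bound.

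\textbf{Step 1 (conditional Hutchinson bound).} The third group of Gaussian vectors is independent of $Q$. Applying Theorem~\ref{thm:hutch} to $B$ with $m/3$ samples and solving for $\eps$ shows that, with probability at least $1-\delta/2$,
\[
|H_{m/3}(B)-\Tr(B)|\le c\left(\sqrt{\tfrac{\log(2/\delta)}{m}}\,\|B\|_F+\tfrac{\log(2/\delta)}{m}\,\|B\|_2\right).
\]
Here $B$ is PSD, and $\|B\|_2\le\|B\|_F\le \|A-QQ^TA\|_F$, because $I-QQ^T$ is an orthogonal projector.

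\textbf{Step 2 (low-rank approximation).} Let $k=m/3-p$ for a small oversampling $p$, say $k\approx m/6$. By the Halko--Martinsson--Tropp tail bounds for the Gaussian range finder, with probability at least $1-\delta/2$ we have $\|A-QQ^TA\|_F\le C\|A-A_k\|_F$. This holds provided $k\gtrsim\log(1/\delta)$, which is why the term $\log(1/\delta)$ appears in $m$.

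\textbf{Step 3 (key PSD inequality).} For PSD $A$,
\[
\|A-A_k\|_F=\Big(\sum_{i>k}\sigma_i^2\Big)^{1/2}\le\sqrt{\sigma_{k+1}\textstyle\sum_{i>k}\sigma_i}\le \tfrac{1}{\sqrt k}\Tr(A),
\]
using $\sigma_{k+1}\le\Tr(A)/k$ and $\sum_{i>k}\sigma_i\le\Tr(A)$.

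\textbf{Step 4 (combine).} A union bound over Steps~1 and~2 gives, with probability at least $1-\delta$,
\[
\text{error}\lesssim\left(\sqrt{\tfrac{\log(1/\delta)}{m}}\cdot\tfrac{1}{\sqrt m}+\tfrac{\log(1/\delta)}{m^{3/2}}\right)\Tr(A).
\]
This is at most $\eps\Tr(A)$ once $m\gtrsim\sqrt{\log(1/\delta)}/\eps+\log(1/\delta)$.

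The main obstacle is Step~2. We need the Frobenius-norm error of the range finder to be within a constant of the optimum with failure probability $\delta$, not just in expectation, and with oversampling only logarithmic in $1/\delta$. I would use the HMT deviation bound (their Theorem~10.7/10.8), taking oversampling $p\sim\log(1/\delta)$ and accepting a constant factor. If the resulting constants are awkward, a fallback is a Markov-type bound on $\E\|A-QQ^TA\|_F^2$; this costs a factor $1/\delta$ and still gives the $\bigO(\eps^{-1})$ rate for fixed $\delta$.
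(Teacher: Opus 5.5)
The paper does not reprove this cited result, but your argument is correct and follows the same route the paper uses for its operator analogue, Theorem~\ref{thm:main}, which is also the route of Meyer et al.: the same splitting, the conditional Girard--Hutchinson bound of Lemma~\ref{lemma:A-is-Ahat-plus-Delta}, the range-finder deviation bound of Proposition~\ref{prop:infsvd}, and your Step~3, which is Proposition~\ref{prop:rank-k-bound}; the only cosmetic difference is that the paper fixes $k=\lceil 8\sqrt{L}/\eps\rceil$ rather than $k\approx m/6$. One point to tighten: the Frobenius factor $1+t\sqrt{3k/(p+1)}$ in the HMT bound is $\bigO(1)$ only if $p\gtrsim k$, so you need $p\gtrsim k+\log(1/\delta)$ (the paper's $p\ge C_0(k+L)$); your choice $k\approx p\approx m/6$ delivers this, whereas the ``small oversampling $p\sim\log(1/\delta)$'' you mention at the end would inflate $C$ by roughly $\sqrt{k/\log(1/\delta)}$ and cancel the $1/\sqrt{k}$ gain from Step~3.
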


\begin{algorithm}
    \caption{Hutch++ for matrices \cite[Algorithm 1]{meyer2021hutchoptimalstochastictrace}}
    \label{alg:hutchPP}
    \begin{algorithmic}
        \State \textbf{Input:} SPSD matrix $A \in \R^{n \times n}$ (accessed through matrix-vector products); integer m (divisible by 3)
        \State \textbf{Output:} Approximation of $\Tr(A)$
        \State Sample independent Gaussian random matrices $S \in \R^{n \times (m/3)}$ and $ G \in \R^{n \times (m/3)}$.
        \State Compute an orthonormal basis $Q$ for span of $AS$.
        \State \textbf{Return} $\mathrm{H}_{++}$($A$) = $\Tr(Q^TAQ) + \frac{3}{m}\Tr(G^T(I-QQ^T)A(I-QQ^T)G)$.
    \end{algorithmic}
\end{algorithm}

\section{Operator trace estimation}
\label{sec:optrace}

Algorithm~\ref{alg:hutchPP} samples (standard) Gaussian random vectors. A meaningful infinite-dimensional generalization of this algorithm needs to sample Gaussian random elements from the infinite-dimensional Hilbert space $\H$ instead. The ContHutch++ algorithm presented in~\cite{zvonek2024conthutchstochastictraceestimation} is such a generalization that samples from Gaussian processes with squared exponential kernels. In the spirit of~\cite{kressner2025randomizedsvdinfinitedimensions},
the approach pursued in this work instead uses Gaussian random elements with covariance operators derived from $\A$.  This approach makes the algorithm easier to analyze; in fact, its properties become a direct generalization of the matrix case. Importantly, and unlike the algorithms from~\cite{zvonek2024conthutchstochastictraceestimation}, our (idealized) infinite-dimensional generalizations produce unbiased trace estimators. 
To make the implementation of these algorithms viable, we will also sketch how our algorithms combine with discretizations of $\A$.

\subsection{Unbiased Girard--Hutchinson estimator{: infHutch}}
\label{sec:unbiased-operator-Hutch}

Given a positive trace-class operator $\A : \H \to \H$, our estimators use random elements in $\H$ from the Gaussian distribution with covariance operator $\A$. 
To provide details on this distribution, let us consider the spectral decomposition~\eqref{eq:spectraldecomp} of $\A$ with  nonzero eigenvalues $\sigma_1 \ge \sigma_2 \ge \cdots > 0$, and corresponding orthonormal eigenvectors $u_1,u_2,\ldots$. 
By Kolmogorov's extension theorem \cite[Theorem 2.1.21]{durrett}, we can consider a sequence of i.i.d. standard Gaussian random variables $\{\omega_i\}_{i=1}^\infty$, and define
\begin{equation} \label{eq:gauss_inf}
x^{(k)} = \sum_{i=1}^k \omega_i \sqrt{\sigma_i} u_i.
\end{equation}
Since $\A$ is trace-class, Tonelli's theorem \cite[Proposition 5.2.1]{cohn_measure_theory} yields $\ex[\sum_i \sigma_i \omega_i^2] < \infty$. Together with its nonnegativity, this implies that $\sum_i \sigma_i \omega_i^2$ is finite almost surely. 
In turn, $x^{(k)}$ converges almost surely to a random element $x$ in $\H$. The distribution of $x$ coincides with the Gaussian distribution $N(0, \A)$ with mean $0$ and covariance $\A$ as defined in \cite{da2006introduction}.

We now consider independent random elements $x_1,...,x_m \in \H$ with $x_i \sim N(0, \A)$, and define the random variable
\begin{equation}\label{eq:inf-hutch}
    H_m(\A) = \frac{1}{m}\sum_{i=1}^m \|x_i\|^2,
\end{equation}
as the operator variant of the Girard--Hutchinson estimator. {We refer to this estimator as (idealized) infHutch in the following text.} Monotone convergence implies that {infHutch} is an unbiased estimator for $\Tr(\A)$:
$$
    \E[\|x\|^2] 
        = \E\Big[\Big\|\lim_k\sum_{i=1}^k \omega_i \sqrt{\sigma_i} u_i\Big\|^2\Big] 
        = \lim_k \sum_{i=1}^k \sigma_i\ex\left[ \omega_i^2\right] = \Tr(\A).
$$
To get tail bounds, we will use the sub-gamma property of $\|x\|^2 - \Tr(\A)$. Recall that a centered random variable $X$ is called sub-gamma\footnote{Note that this definition, sometimes referred to as two-sided sub-gamma, requires that the MGF of $X$ is bounded on both tails, that is, both for positive and negative values of $\lam$.} with parameters $(\nu, c)$ if its moment generating function (MGF) can be bounded as 
\begin{equation} \label{eq:subgamma}
     \Phi_X(\lam) := \log\left( \E[\exp(\lam X)]\right) \leq \frac{\nu \lam^2}{2(1-c|\lam|)}, \text{ for all } |\lam| < 1/c.
\end{equation}

\begin{lemma} 
    \label{lema:y-sum-gamma}
    The random variable $\|x\|^2 - \Tr(\A)$ is sub-gamma with parameters $(2\|\A\|_{\mathrm{HS}}^2, 2\|\A\|).$
\end{lemma}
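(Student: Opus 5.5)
Since $\|x\|^2 = \sum_{i=1}^\infty \sigma_i \omega_i^2$ almost surely (by orthonormality of the $u_i$ and the almost sure convergence of $x^{(k)}$), I would write
\[
X := \|x\|^2 - \Tr(\A) = \sum_{i=1}^\infty \sigma_i(\omega_i^2 - 1),
\]
a sum of independent centered terms. The plan is to bound the cumulant generating function of each term, add these bounds using independence for finite partial sums, and then pass to the limit.

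\textbf{Step 1 (scalar bound).} For a standard Gaussian $\omega$ and $|t|<1/2$ we have the exact formula $\log \E[\exp(t(\omega^2-1))] = -t - \tfrac12\log(1-2t)$. The task is to show
\[
-t - \tfrac12\log(1-2t) \le \frac{t^2}{1-2|t|}.
\]
I would expand $-\tfrac12\log(1-2t) - t = \sum_{k\ge2} \frac{(2t)^k}{2k}$. Each coefficient satisfies $\frac{2^k}{2k}\le 2^{k-2}\cdot 2$ for $k \ge 2$. Comparing term by term, the series is therefore bounded by $t^2\sum_{j\ge0}(2|t|)^j = t^2/(1-2|t|)$. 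This works for both signs of $t$, since only $|t|^k$ appears after bounding (for $t<0$ the odd terms are negative anyway).

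\textbf{Step 2 (apply to each term and sum).} Set $t=\lam\sigma_i$. Because $\sigma_i\le\sigma_1=\|\A\|$, the condition $|\lam|<1/(2\|\A\|)$ ensures $|t|<1/2$, and also $1-2\sigma_i|\lam|\ge 1-2\|\A\||\lam|$. Independence then gives, for the partial sum $X_k=\sum_{i\le k}\sigma_i(\omega_i^2-1)$,
\[
\log\E[e^{\lam X_k}] \le \sum_{i\le k}\frac{\lam^2\sigma_i^2}{1-2\sigma_i|\lam|} \le \frac{\lam^2\|\A\|_{\mathrm{HS}}^2}{1-2\|\A\||\lam|} = \frac{\nu\lam^2}{2(1-c|\lam|)},
\]
with $\nu = 2\|\A\|_{\mathrm{HS}}^2$ and $c=2\|\A\|$, which are exactly the claimed parameters.

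\textbf{Step 3 (limit).} This is the main technical point. $X_k\to X$ almost surely, so $e^{\lam X_k}\to e^{\lam X}$ almost surely, and Fatou's lemma gives $\E[e^{\lam X}]\le\liminf_k \E[e^{\lam X_k}]$. The uniform bound from Step 2 then transfers to $X$. No uniform integrability argument is needed, because Fatou's lemma only requires nonnegativity of $e^{\lam X_k}$. Finally, $X$ is centered by the unbiasedness computation already shown in the paper. Alternatively, the result can be cited directly from \cite[Proposition 2.2]{mollenhauer2023concentrationsubgaussianvectorspositive}, but the self-contained argument above is short.
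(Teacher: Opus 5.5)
Your argument is correct and is essentially the second route in the paper's proof. That route takes the finite-dimensional sub-gamma bound and passes to the limit in $k$; you rederive the bound from the scalar $\chi^2$ cumulant generating function instead of citing Cortinovis--Kressner, Lemma 4, and you use Fatou's lemma instead of dominated convergence, which suffices because only an upper bound on $\E[e^{\lam X}]$ is needed. One small fix in Step 1: the coefficient comparison you need is $\frac{2^k}{2k}\le 2^{k-2}$ for $k\ge 2$ (true since $2k\ge 4$), whereas the inequality $\frac{2^k}{2k}\le 2^{k-2}\cdot 2$ as written would only give $2t^2/(1-2|t|)$, i.e.\ $\nu=4\|\A\|_{\mathrm{HS}}^2$.
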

\begin{proof}
For non-negative $\lambda$, the proof in~\cite[Proposition 2.2, Appendix A]{mollenhauer2023concentrationsubgaussianvectorspositive} establishes the property~\eqref{eq:subgamma} for $\|x\|^2 - \Tr(\A)$ with the stated parameters in a more general context. This proof extends seamlessly to negative values of $\lam$. Another way to establish the statement of the lemma is to apply dominated convergence to the finite-dimensional version of the same result~\cite[Lemma 4]{cortinovis2021randomizedtraceestimatesindefinite}.
\end{proof}

Now, a tail bound for the estimator $H_m(\A)$ follows from known properties of sub-gamma random variables.

\begin{thm}
    \label{thm:hw-hutch}
    Given a positive trace-class operator $\A$ and independent $x_1,...,x_m \sim N(0,\A)$, the estimator $H_m(\A) = \frac{1}{m}\sum_{i=1}^m \|x_i\|^2$ satisfies for every $\varepsilon > 0$ the tail bound
    \begin{equation*}
    \P(|H_m(\A) - \Tr(\A)|>\varepsilon) \leq 2 \exp\left( - \frac{m \eps^2}{4(\|\A\|_{\mathrm{HS}}^2+\eps\|\A\|)}\right).
    \end{equation*}
\end{thm}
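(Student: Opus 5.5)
The plan is the standard Chernoff argument for sums of independent sub-gamma variables, with Lemma~\ref{lema:y-sum-gamma} as the only input. Set $X_i := \|x_i\|^2 - \Tr(\A)$ for $i=1,\dots,m$. These are i.i.d., centered, and by Lemma~\ref{lema:y-sum-gamma} each is sub-gamma with parameters $(\nu,c) = (2\|\A\|_{\mathrm{HS}}^2, 2\|\A\|)$. Then
\[
H_m(\A)-\Tr(\A) = \frac1m S, \qquad S := \sum_{i=1}^m X_i .
\]
By independence the log-MGFs add, $\Phi_S(\lam) = \sum_i \Phi_{X_i}(\lam)$. Hence $S$ is sub-gamma with parameters $(m\nu, c)$ for $|\lam|<1/c$.

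Next I bound the upper tail. For $t>0$ and $0<\lam<1/c$, Markov's inequality applied to $e^{\lam S}$ gives
\[
\P(S>t)\le \exp\Big(-\lam t + \frac{m\nu\lam^2}{2(1-c\lam)}\Big).
\]
Rather than optimize exactly, I will plug in the convenient choice $\lam = t/(m\nu + ct)$. This lies in $(0,1/c)$, and it gives $1-c\lam = m\nu/(m\nu+ct)$. The exponent then simplifies to
\[
-\frac{t^2}{m\nu+ct} + \frac{t^2}{2(m\nu+ct)} = -\frac{t^2}{2(m\nu+ct)} .
\]
Because the sub-gamma property in~\eqref{eq:subgamma} is two-sided, the same computation applied to $-S$ (i.e.\ negative $\lam$) yields the identical bound for $\P(S<-t)$. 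A union bound then gives
\[
\P(|S|>t)\le 2\exp\Big(-\frac{t^2}{2(m\nu+ct)}\Big).
\]

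Finally I substitute $t = m\eps$, $\nu = 2\|\A\|_{\mathrm{HS}}^2$ and $c = 2\|\A\|$. The exponent becomes
\[
\frac{m^2\eps^2}{2(2m\|\A\|_{\mathrm{HS}}^2 + 2m\eps\|\A\|)} = \frac{m\eps^2}{4(\|\A\|_{\mathrm{HS}}^2+\eps\|\A\|)},
\]
which is exactly the stated bound.

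There is no real obstacle, since all the analytic difficulty sits in Lemma~\ref{lema:y-sum-gamma}. The one point to check is the admissibility of the chosen $\lam$: we need $|\lam|<1/c$, which holds strictly because $m\nu>0$. The degenerate case $\A=0$ is excluded, since then the estimator is identically zero and the bound is trivial.
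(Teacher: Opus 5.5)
Your proof is correct and follows the paper's route: Lemma~\ref{lema:y-sum-gamma}, then closure of sub-gamma variables under independent sums, then the standard two-sided sub-gamma tail bound. The paper rescales each summand by $1/m$ first and cites these two facts from the literature, whereas you derive them directly (additivity of log-MGFs with a common scale $c$, and a Chernoff bound with $\lambda = t/(m\nu+ct)$), arriving at the same exponent $m\varepsilon^2/\bigl(4(\|\A\|_{\mathrm{HS}}^2+\varepsilon\|\A\|)\bigr)$.
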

\begin{proof}
The result follows from the arithmetic with sub-gamma random variables. By
Lemma~\ref{lema:y-sum-gamma} and scaling, 
    $
        Z := H_m(\A) - \Tr(\A) = \frac{1}{m} \sum_{i=1}^m \left( \|x_i\|^2 - \Tr(\A) \right)
    $
is a sum of $m$ independent sub-gamma random variables with parameters $(2\|\A\|_{\mathrm{HS}}^2 / m^2, 2\|\A\| / m)$.
By Proposition 5.1 (a) in~\cite{Zhang2021}, it follows that $Z$ is sub-gamma with parameters 
$(\nu,c) = (2\|\A\|_{\mathrm{HS}}^2 / m, 2\|\A\| / m)$. Thus, the result of the theorem follows from the standard tail bound
for sub-gamma random variables,
     $$
         \P(|Z| > \eps) \leq 2\exp\left(\frac{-\eps^2}{2(\nu+c\eps)} \right), \quad \eps > 0.
     $$    
see, e.g.,~\cite[Proposition 2.10]{Wainwright19}.
\end{proof}

Since $\|\A\|_{\mathrm{HS}},\|\A\|\leq \Tr(\A)$, Theorem~\ref{thm:hw-hutch}
implies that, for fixed failure probability, infHutch requires
$\bigO(\eps^{-2})$ samples to achieve relative accuracy $\eps$.

Let us highlight, once more, that our framework and analysis are a direct generalization of the finite-dimensional setting.

In particular, the bound of Theorem~\ref{thm:hw-hutch} coincides with the corresponding result for the matrix case \cite[Theorem 5]{cortinovis2021randomizedtraceestimatesindefinite}.

\subsection{The infHutch++ algorithm}

Following the idea of the Hutch++ algorithm for matrices~\cite{meyer2021hutchoptimalstochastictrace} and the ContHutch++ algorithm for operators~\cite{zvonek2024conthutchstochastictraceestimation}, we split the trace estimator into two parts.
First, given the orthogonal projector $QQ^*$ onto a suitable finite-dimensional subspace spanned by the columns of an orthonormal quasimatrix $Q$, the operator $\A$ is approximated by the low-rank operator $QQ^* \A QQ^*$. The trace of this approximation coincides with $\Tr(Q^* \A Q)$, yielding the splitting
\begin{equation} \label{eq:hutchppslit}
 \Tr(\A) = \Tr(Q^* \A Q) + \Tr(\Delta), \quad \text{with} \quad \Delta :=(I-QQ^*)\A(I-QQ^*). 
\end{equation}
To determine $Q$, we use the infinite-dimensional randomized SVD from~\cite{kressner2025randomizedsvdinfinitedimensions} and choose a subspace that is spanned by samples from $N(0, \A \A^*) = N(0, \A^2)$. 
To estimate the remainder $\Tr(\Delta)$ in~\eqref{eq:hutchppslit}, we use the Girard--Hutchinson estimator described in Section~\ref{sec:unbiased-operator-Hutch}. Instead of sampling $x_i \sim N(0,\Delta)$ and computing $\|x_i\|^2$, we can equivalently sample 
$g_i \sim N(0,\A)$ and compute $\|(I-QQ^*) g_i\|^2 = \langle g_i, (I-QQ^*) g_i \rangle$.
This follows from
\[
    (I-QQ^*)g_i \sim N\bigl(0,(I-QQ^*)\A(I-QQ^*)\bigr)
    = N(0,\Delta).
\]
In particular, conditioned on $Q$, the resulting
estimator of $\Tr(\Delta)$ is unbiased.
The resulting (idealized) infHutch++ algorithm is summarized in Algorithm~\ref{alg:operator-hutchpp}.
Note that $G$ is understood to be independent of $S$.

\begin{algorithm}
    \caption{infHutch++}
    \label{alg:operator-hutchpp}
    \begin{algorithmic}
        \State \textbf{Input:} Positive trace-class operator $\A$, integer $m$ (divisible by 3) determining number of samples
        \State \textbf{Output:} Approximation of $\Tr(\A)$
        \State Draw $m/3$ independent samples from $N(0,\A^2)$ and collect them in quasimatrix $S$.
        \State Compute an orthonormal basis $Q$ for span of $S$.
        \State Draw $m/3$ independent samples from $N(0,\A)$ and collect them in quasimatrix $G$.
        \State Compute $H_{m/3}( \Delta ) = \frac{3}{m}\Tr(G^* (I-QQ^*) G)$
        \State \textbf{Return} $\hat{T}(\A)$ = $\Tr(Q^* \A Q) + H_{m/3}( \Delta )$.
    \end{algorithmic}
\end{algorithm}

In the following, we will show that infHutch++ reduces the $\bigO(\eps^{-2})$ sample complexity of Girard--Hutchinson to $\bigO(\eps^{-1})$, mirroring the corresponding result~\cite[Theorem 1]{meyer2021hutchoptimalstochastictrace} for Hutch++. For this purpose, we will make use of existing results 
on the (randomized) low-rank approximations of $\A$.

\begin{prop}[{\cite[Theorem 5.1]{zvonek2024conthutchstochastictraceestimation}}]
    \label{prop:rank-k-bound}
    Let $\A_k$ for $k \ge 1$ denote the best rank-$k$ approximation of a positive trace-class operator $\A$. Then
    $$
        \|\A - \A_k\|_{\mathrm{HS}} \leq \frac{1}{\sqrt{k}} \Tr(\A).
    $$
\end{prop}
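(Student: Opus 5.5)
The plan is to work entirely with the spectral decomposition~\eqref{eq:spectraldecomp} and the error formula from the Schmidt--Mirsky theorem, which gives
\[
\|\A-\A_k\|_{\mathrm{HS}}^2 = \sum_{i=k+1}^\infty \sigma_i^2 .
\]
Since the eigenvalues are ordered, $\sigma_i \le \sigma_{k+1}$ for all $i \ge k+1$. This yields the first bound
\[
\sum_{i=k+1}^\infty \sigma_i^2 \le \sigma_{k+1}\sum_{i=k+1}^\infty \sigma_i \le \sigma_{k+1}\,\Tr(\A),
\]
where the tail sum is bounded by the full trace because all $\sigma_i>0$.

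Next I bound $\sigma_{k+1}$ in terms of the trace. By monotonicity, $k\,\sigma_{k+1} \le \sigma_1+\cdots+\sigma_k \le \Tr(\A)$, hence $\sigma_{k+1} \le \Tr(\A)/k$.

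Combining the two bounds gives
\[
\|\A-\A_k\|_{\mathrm{HS}}^2 \le \frac{\Tr(\A)^2}{k},
\]
and taking square roots proves the claim.

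There is no real obstacle here. The only points requiring care are the convergence of the series, which holds because $\A$ is trace-class, and the case of finitely many nonzero eigenvalues. In the latter case the tail sum may be empty, the left-hand side is zero, and the bound holds trivially. The convention $\sigma_{k+1}=0$ handles this case uniformly.
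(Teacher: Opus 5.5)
Your proof is correct: bounding $\sum_{i>k}\sigma_i^2 \le \sigma_{k+1}\Tr(\A)$ and $\sigma_{k+1}\le \Tr(\A)/k$ is the standard argument (the operator analogue of the corresponding lemma in \cite{meyer2021hutchoptimalstochastictrace}) behind \cite[Theorem 5.1]{zvonek2024conthutchstochastictraceestimation}, which the paper cites rather than reproves. As a side remark, keeping the factors $\frac{1}{k}\sum_{i\le k}\sigma_i$ and $\sum_{i>k}\sigma_i$ separate and applying AM--GM improves the constant to $\frac{1}{2\sqrt{k}}$.
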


Instead of the best rank-$k$ approximation $\A_k$, Algorithm \ref{alg:operator-hutchpp} implicitly computes a randomized approximation to $\A_k$. To analyze that approximation, let us first make this approximation more explicit. Given an oversampling parameter $p$, we consider the following procedure.
\begin{enumerate}
    \item Sample a random quasimatrix $S= [s_1, s_2,...,s_{k+p}]$  with $s_i \sim N(0,\A \A^*)$ i.i.d.
    \item Compute an orthonormal basis $Q$ of range$(S)$.
    \item Return finite-rank operator $\Tilde{\A} = QQ^*A$.
\end{enumerate}

\begin{prop}[{\cite[Theorem 2]{kressner2025randomizedsvdinfinitedimensions}}]
    \label{prop:infsvd}
    Consider $\A_k,\Tilde{\A}$ defined as above and suppose that $k \geq 2$ and $p \geq 4$. Then for all $u,t \geq 1$, 
    the inequality 
    $$
     \|\A - \Tilde{\A}\|_{\mathrm{HS}}\leq \|\A - {\A_k}\|_{\mathrm{HS}} + \eta, \quad \text{with } \eta = t  \sqrt{\frac{3k}{p+1}} \|\A - {\A_k}\|_{\mathrm{HS}} + ut \frac{e \sqrt{k+p}}{p+1}\|\A - {\A_k}\|
    $$
    holds with probability at least $1 - 2t^{-p}-e^{-u^2/2}$. 
\end{prop}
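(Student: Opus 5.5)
The plan is to reduce everything to the eigenbasis of $\A$ and then follow the structured-matrix analysis of Halko, Martinsson and Tropp~\cite{HalkoMartinssonTropp11}, checking at each step that it survives the infinite-dimensional limit. Since $\A\A^* = \A^2 = \sum_j \sigma_j^2 \langle u_j,\cdot\rangle u_j$, the construction~\eqref{eq:gauss_inf} applied to $\A^2$ gives the representation
\[
s_i = \sum_{j=1}^\infty \sigma_j\,\omega_{ji}\,u_j,
\]
with i.i.d.\ standard Gaussians $\omega_{ji}$. In quasimatrix notation this reads $S = U\Sigma\Omega$, where $\Omega$ is an infinite Gaussian matrix with $k+p$ columns. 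We split $\Omega$ into its first $k$ rows $\Omega_1 \in \R^{k\times(k+p)}$ and the remaining rows $\Omega_2$. Correspondingly, we split $\Sigma$ into $\Sigma_1 = \mathrm{diag}(\sigma_1,\dots,\sigma_k)$ and the tail $\Sigma_2$. Then
\[
\|\Sigma_2\|_{\mathrm{HS}} = \|\A-\A_k\|_{\mathrm{HS}}, \qquad \|\Sigma_2\| = \|\A-\A_k\|.
\]
Since $\Sigma_2$ is trace class, $\Sigma_2\Omega_2$ is almost surely a Hilbert--Schmidt map $\R^{k+p}\to\ell^2$.

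\textbf{Deterministic bound.} Almost surely $\Omega_1$ has full row rank, so we may use its pseudoinverse $\Omega_1^\dagger$. The first step is the analogue of~\cite[Theorem 9.1]{HalkoMartinssonTropp11}:
\[
\|(I-QQ^*)\A\|_{\mathrm{HS}}^2 \le \|\Sigma_2\|_{\mathrm{HS}}^2 + \|\Sigma_2\Omega_2\Omega_1^\dagger\|_{\mathrm{HS}}^2 .
\]
I would prove it by the same argument as in finite dimensions. First, $\mathrm{range}(S\Omega_1^\dagger)\subseteq\mathrm{range}(S)$, so the projector onto $\mathrm{range}(S)$ does at least as well as the projector onto $\mathrm{range}(S\Omega_1^\dagger)$. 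Next, $S\Omega_1^\dagger = U\,\smb \Sigma_1 \\ F \sme$ with $F = \Sigma_2\Omega_2\Omega_1^\dagger$. Finally, the error of that projection is bounded via the explicit $2\times 2$ block bound for $I-P_{[I;F]}$. The block argument only uses that $F$ is a bounded (indeed Hilbert--Schmidt) operator, so it carries over verbatim. Taking square roots gives
\[
\|\A-\tilde\A\|_{\mathrm{HS}} \le \|\A-\A_k\|_{\mathrm{HS}} + \|\Sigma_2\Omega_2\Omega_1^\dagger\|_{\mathrm{HS}}.
\]

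\textbf{Concentration in $\Omega_2$.} Condition on $\Omega_1$, which is independent of $\Omega_2$. The function $f(\Omega_2) = \|\Sigma_2\Omega_2\Omega_1^\dagger\|_{\mathrm{HS}}$ is Lipschitz with constant $\|\Sigma_2\|\,\|\Omega_1^\dagger\|$ with respect to the Frobenius norm of $\Omega_2$. By Jensen,
\[
\E f \le \bigl(\E f^2\bigr)^{1/2} = \|\Sigma_2\|_{\mathrm{HS}}\,\|\Omega_1^\dagger\|_F .
\]
Gaussian concentration for Lipschitz functions then gives, with conditional probability at least $1-e^{-u^2/2}$,
\[
f \le \|\A-\A_k\|_{\mathrm{HS}}\,\|\Omega_1^\dagger\|_F + u\,\|\A-\A_k\|\,\|\Omega_1^\dagger\| .
\]

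\textbf{Bounds on $\Omega_1^\dagger$ and union bound.} The remaining ingredient is finite-dimensional:~\cite[Propositions 10.3, 10.4]{HalkoMartinssonTropp11} give, for $k\ge 2$ and $p\ge 4$,
\[
\P\Bigl(\|\Omega_1^\dagger\|_F > t\sqrt{\tfrac{3k}{p+1}}\Bigr)\le t^{-p}, \qquad \P\Bigl(\|\Omega_1^\dagger\| > t\,\tfrac{e\sqrt{k+p}}{p+1}\Bigr)\le t^{-(p+1)}\le t^{-p}.
\]
A union bound over these two events and the concentration event yields exactly $\eta$ with failure probability at most $2t^{-p}+e^{-u^2/2}$. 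Here I use $u\le ut$ for $t\ge1$ and $1\le t$ to absorb constants.

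\textbf{Main obstacle.} I expect the hardest step to be the rigorous passage to infinite dimensions, in the deterministic bound and in the Gaussian concentration for a Lipschitz function of infinitely many variables. I would first apply both to the truncation $\Sigma_2^{(n)}$, which keeps only the rows up to index $n$. For each $n$ the estimates are standard, and they are uniform in $n$ because the truncated norms are bounded by the full ones. I would then let $n\to\infty$, using almost sure HS-convergence $\Sigma_2^{(n)}\Omega_2\to\Sigma_2\Omega_2$ (via monotone convergence of the squared norms) together with Fatou's lemma or continuity of probability for the increasing tail events.
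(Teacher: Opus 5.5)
Your proposal is correct and is essentially the argument behind this result. The paper does not prove the statement itself but imports it from \cite[Theorem~2]{kressner2025randomizedsvdinfinitedimensions}, the infinite-dimensional counterpart of \cite[Theorem~10.7]{HalkoMartinssonTropp11}. Your combination of the structural bound, Gaussian concentration in $\Omega_2$ conditioned on $\Omega_1$, the tail bounds for $\|\Omega_1^\dagger\|_F$ and $\|\Omega_1^\dagger\|$, and a union bound reproduces that argument. The monotone truncation $f_n\uparrow f$ together with continuity from below of the events $\{f_n>c\}$ correctly handles the infinitely many Gaussians; truncation is not needed for the deterministic bound, which holds directly.

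The remaining fixes are cosmetic. You should apply the block bound to $[I;\,F\Sigma_1^{-1}]$, which has the same range as $[\Sigma_1;\,F]$ and produces the term $\|F\|_{\mathrm{HS}}^2$. This step requires $\sigma_k>0$. For $\operatorname{rank}(\A)<k$ the claim is trivial, because $\operatorname{range}(S)=\operatorname{range}(\A)$ almost surely and both sides vanish.
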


To continue from here, we first generalize an auxiliary result from~\cite[Theorem 4]{meyer2021hutchoptimalstochastictrace} to operators.
\begin{lemma}
    \label{lemma:A-is-Ahat-plus-Delta}
    Let $\A$ be a positive trace-class operator, let $0 < \delta < 1$, and consider integers $k\ge 1$, $\ell \ge 1$. Let $\hat{\mathcal{A}}$ and $\Delta$ be any positive trace-class operators satisfying
    $$
        \Tr(\A) = \Tr(\hat{\A}) + \Tr(\Delta), \quad \|\Delta\|_{\mathrm{HS}} \leq 2 \|\A - \A_k \|_{\mathrm{HS}},
    $$
    where $\A_k$ is the best rank-$k$ approximation of $\A$. 
    If $\ell \ge 4 \log(2/\delta)$, then with probability at least $1 - \delta$,
    $$
        |\Tr(\hat{\A}) + H_{\ell}(\Delta)  - \Tr(\A)| \leq 8 \sqrt{\frac{\log(2/\delta)}{k \ell}} \cdot \Tr(\A).
    $$
\end{lemma}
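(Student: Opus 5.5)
The plan is to apply the tail bound of Theorem~\ref{thm:hw-hutch} to the operator $\Delta$ with $m=\ell$ samples, and then convert the resulting $\|\Delta\|_{\mathrm{HS}}$ and $\|\Delta\|$ terms into $\Tr(\A)$ using Proposition~\ref{prop:rank-k-bound}. Since $\Tr(\hat\A)+H_\ell(\Delta)-\Tr(\A) = H_\ell(\Delta)-\Tr(\Delta)$ by the trace splitting assumption, the whole statement reduces to a concentration bound for infHutch applied to $\Delta$. Here $H_\ell(\Delta)$ is understood as computed from $\ell$ independent samples of $N(0,\Delta)$, with $\Delta$ deterministic, or fixed by conditioning.

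First, I invert the tail bound of Theorem~\ref{thm:hw-hutch}. The failure probability is at most $\delta$ provided
\[
\ell\,\eps^2 \ge 4\log(2/\delta)\bigl(\|\Delta\|_{\mathrm{HS}}^2+\eps\|\Delta\|\bigr).
\]
Following Meyer et al., I set
\[
\eps := 8\sqrt{\frac{\log(2/\delta)}{k\ell}}\,\Tr(\A)
\]
and verify this inequality. By the hypothesis together with Proposition~\ref{prop:rank-k-bound},
\[
\|\Delta\|_{\mathrm{HS}} \le 2\|\A-\A_k\|_{\mathrm{HS}} \le \frac{2}{\sqrt{k}}\Tr(\A).
\]
Also $\|\Delta\|\le\|\Delta\|_{\mathrm{HS}}\le 2\Tr(\A)/\sqrt{k}$. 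Abbreviating $L:=\log(2/\delta)$ and $T:=\Tr(\A)$, it therefore suffices to show
\[
\frac{64 L T^2}{k} \ \ge\ 4L\Bigl(\frac{4T^2}{k}+\eps\,\frac{2T}{\sqrt{k}}\Bigr).
\]
This simplifies to $16T^2/k \ge 4T^2/k + 2\eps T/\sqrt{k}$, that is, $\eps \le 6T/\sqrt{k}$.

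The main point requiring care is this last condition, which must come from the assumption $\ell\ge 4\log(2/\delta)$. Indeed $\eps = 8T\sqrt{L/(k\ell)} \le 8T/(2\sqrt{k}) = 4T/\sqrt{k}\le 6T/\sqrt{k}$, so the condition holds. The constants therefore close, with some slack.

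There is one subtlety to check. Theorem~\ref{thm:hw-hutch} gives a strict bound $>$ on the probability of failure, and the lemma's conclusion uses $\le$, which is consistent. I would also remark that if $\Delta$ is random (as in infHutch++, where it depends on $Q$), the argument is applied conditionally on $\Delta$, with the samples defining $H_\ell(\Delta)$ independent of it. Finally, the degenerate case $\Delta=0$ is trivial, since then $H_\ell(\Delta)=0=\Tr(\Delta)$.
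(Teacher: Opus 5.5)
Your proposal is correct and follows essentially the paper's argument: apply Theorem~\ref{thm:hw-hutch} to $\Delta$, bound $\|\Delta\|_{\mathrm{HS}}$ through the hypothesis and Proposition~\ref{prop:rank-k-bound}, and use $\ell\ge 4\log(2/\delta)$ to control the $\eps\|\Delta\|$ term. The only difference is bookkeeping. The paper first takes $\eps = 4\|\Delta\|_{\mathrm{HS}}\sqrt{\log(2/\delta)/\ell}$, obtaining failure probability $2(\delta/2)^{4/3}\le\delta$, and bounds this $\eps$ by the target afterwards. You instead set $\eps$ equal to the target directly and verify the tail condition using $\|\Delta\|\le\|\Delta\|_{\mathrm{HS}}\le 2\Tr(\A)/\sqrt{k}$.
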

\begin{proof}
    The following proof is an adaptation of the proof of \cite[Theorem 4]{meyer2021hutchoptimalstochastictrace} to our setting. By Theorem~\ref{thm:hw-hutch} applied to the operator $\Delta$, 
    \begin{equation} \label{eq:auxfail}
     \P(|H_\ell(\Delta) - \Tr(\Delta)|>\varepsilon) \leq 2 \exp\left( - \frac{\ell \eps^2}{4(\|\Delta\|_{\mathrm{HS}}^2+\eps\|\Delta\|)}\right).
    \end{equation}
    If $\Delta = 0$, the claim is immediate. Otherwise, 
    choose $\varepsilon = 4 \|\Delta\|_{\mathrm{HS}} \sqrt{ \log(2/\delta) / \ell}$. Noting that $\sqrt{ \log(2/\delta) / \ell} \le 1/2$, we obtain that     $
     \|\Delta\|_{\mathrm{HS}}^2+\eps\|\Delta\| \le 3 \|\Delta\|_{\mathrm{HS}}^2. 
    $
    In turn, the exponent in the right-hand side of~\eqref{eq:auxfail} is at least $4 \log(2/\delta) /3$ and, hence, the failure probability is at most $2 \exp( - 4 \log(2/\delta) /3) \le \delta$. Hence, with probability at least $1-\delta$,
    \[
     |H_\ell(\Delta) - \Tr(\Delta) | \le 4 \sqrt{\frac{\log(2/\delta)}{\ell}} \|\Delta\|_{\mathrm{HS}} 
     \le 8 \sqrt{\frac{\log(2/\delta)}{\ell}} \|\A - \A_k \|_{\mathrm{HS}} \le 8 \sqrt{\frac{\log(2/\delta)}{k\ell}}
     \Tr(\A),
    \]
    where we used Proposition~\ref{prop:rank-k-bound} for the last inequality. Using that 
    $\Tr(\Delta) = \Tr(\A) - \Tr(\hat{\A})$ completes the proof.
\end{proof}

Finally, the following theorem establishes $\bigO(\varepsilon^{-1})$ sample complexity of infHutch++.
\begin{thm}
\label{thm:main}
There is a universal constant $C$ such that the following holds.
Let $\A$ be a positive trace-class operator and let
$0<\eps<1$, $0<\delta<1$. If infHutch++
(Algorithm~\ref{alg:operator-hutchpp}) is executed with an integer $m$
divisible by $3$ and satisfying
\[
    m \ge
    C\big(
        \sqrt{\log(2/\delta)}\,\eps^{-1}
        + \log(2/\delta)
    \big),
\]
then its output $\hat{T}(\A)$ satisfies
\begin{equation}
\label{eq:relaux}
    (1-\eps)\Tr(\A)
    \leq \hat{T}(\A)
    \leq (1+\eps)\Tr(\A)
\end{equation}
with probability at least $1-\delta$.
\end{thm}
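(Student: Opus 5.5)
The plan is to combine Proposition~\ref{prop:infsvd} (randomized range finder with oversampling) and Lemma~\ref{lemma:A-is-Ahat-plus-Delta}, conditioning on $Q$. Set $\ell = m/3$. The $m/3$ columns of $S$ are split as $m/3 = k+p$ with, say, $k = \lfloor m/6\rfloor$ and $p = m/3 - k$, so that $k,p \approx m/6$. Since $C$ is large, $k\ge2$ and $p\ge4$ hold. Take $\hat{\A} = QQ^*\A QQ^*$ and $\Delta = (I-QQ^*)\A(I-QQ^*)$. Both are positive trace class and satisfy $\Tr(\A)=\Tr(\hat\A)+\Tr(\Delta)$ by~\eqref{eq:hutchppslit}. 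Moreover $G$ is independent of $S$, so conditionally on $Q$ the estimator $\frac{3}{m}\Tr(G^*(I-QQ^*)G)$ is exactly $H_\ell(\Delta)$ with samples from $N(0,\Delta)$. Hence Lemma~\ref{lemma:A-is-Ahat-plus-Delta} applies conditionally once we know $\|\Delta\|_{\mathrm{HS}}\le 2\|\A-\A_k\|_{\mathrm{HS}}$.

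\textbf{Step 1: the event on $Q$.} We have $\|\Delta\|_{\mathrm{HS}} \le \|(I-QQ^*)\A\|_{\mathrm{HS}}\,\|I-QQ^*\| \le \|\A-\tilde\A\|_{\mathrm{HS}}$. Proposition~\ref{prop:infsvd} gives $\|\A-\tilde\A\|_{\mathrm{HS}}\le \|\A-\A_k\|_{\mathrm{HS}}+\eta$, and we need $\eta\le \|\A-\A_k\|_{\mathrm{HS}}$. Using $\|\A-\A_k\|\le\|\A-\A_k\|_{\mathrm{HS}}$, it suffices that
\[
t\sqrt{3k/(p+1)} + ut\,e\sqrt{k+p}/(p+1)\le 1 .
\]
With $k\approx p$ the first term is about $t\sqrt3$, which is too large. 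I therefore rebalance the split. I take $k = \lfloor c\, m\rfloor$ with a small constant $c$, say $k\approx m/(3\cdot 50)$, so that $p\approx m/3$ and $3k/(p+1)\le 1/16$. Then I take $t=2$, which gives failure probability $2\cdot 2^{-p}$, and $u=\sqrt{2\log(4/\delta)}$. The second term is then $O(u/\sqrt{p}) = O(\sqrt{\log(1/\delta)/m})$. This is at most $1/2$ because $m\ge C\log(2/\delta)$, and the same condition makes $2^{1-p}\le\delta/4$. With these choices the failure probability of Step~1 is at most $\delta/2$. Losing a constant factor in $k$ only changes constants in the final bound.

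\textbf{Step 2: conditional Hutchinson bound.} On the Step~1 event, apply Lemma~\ref{lemma:A-is-Ahat-plus-Delta} with $\delta/2$ in place of $\delta$. This requires $\ell=m/3\ge 4\log(4/\delta)$, which follows from $m\ge C\log(2/\delta)$. With conditional probability at least $1-\delta/2$,
\[
|\hat T(\A)-\Tr(\A)|\le 8\sqrt{\log(4/\delta)/(k\ell)}\,\Tr(\A) .
\]
Since $k\ell \gtrsim m^2$, the right-hand side is $\lesssim \sqrt{\log(2/\delta)}/m\cdot\Tr(\A)$, which is at most $\eps\Tr(\A)$ once $m\ge C\sqrt{\log(2/\delta)}\,\eps^{-1}$. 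Here I absorb $\log(4/\delta)\le 2\log(2/\delta)$.

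\textbf{Step 3: combining.} A union bound over the two events gives total probability at least $1-\delta$. The main obstacle is Step~1. One must check that Proposition~\ref{prop:infsvd} can be used with constant $t$: the first term of $\eta$ forces a small $k/p$ ratio, while the second term requires $p\gtrsim\log(1/\delta)$. This is exactly where the additive $\log(2/\delta)$ term in the lower bound on $m$ comes from. A secondary technical point is justifying the conditioning rigorously: $G\perp S$, and the tail bound of Theorem~\ref{thm:hw-hutch} holds for every fixed $Q$, so integrating over $Q$ restricted to the good event is legitimate.
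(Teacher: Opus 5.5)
Your proof is correct and follows the paper's argument. Proposition~\ref{prop:infsvd} with $t=2$ and $u\asymp\sqrt{\log(1/\delta)}$ gives $\|\Delta\|_{\mathrm{HS}}\le 2\|\A-\A_k\|_{\mathrm{HS}}$ with probability at least $1-\delta/2$; Lemma~\ref{lemma:A-is-Ahat-plus-Delta} is then applied conditionally on $Q$ with $\delta/2$, and a union bound finishes. The only difference is the split of the $m/3$ range-finder samples. You take $k$ to be a fixed small fraction of $m$, so $k\ell\asymp m^2$. The paper instead takes $k=\lceil 8\sqrt{\log(8/\delta)}\,\eps^{-1}\rceil$, puts the rest of the budget into oversampling $p\ge C_0(k+\log(8/\delta))$, and uses the cruder bound $k(k+p)\ge k^2$. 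Both choices work.
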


\begin{proof}
Set
$L=\log(8/\delta)$, $k=\big\lceil 8\sqrt{L}\,\eps^{-1}\big\rceil$,
$p=\frac{m}{3}-k$, and choose $u=\sqrt{2L}$ and $t=2$.
By choosing the universal constant $C$ in the statement sufficiently large,
we can ensure that
$
    p\geq C_0(k+L)
$
for a sufficiently large universal constant $C_0$. In particular,
$p\geq4$ and $m/3=k+p$.

With $\Delta = (I - QQ^*) \A (I - QQ^*)$, 
Proposition~\ref{prop:infsvd} gives
    $$
    \|\Delta\|_{\mathrm{HS}} \leq \|(I-QQ^*)\A\|_{\mathrm{HS}}\leq \|\A - \A_k \|_{\mathrm{HS}} \left( 1 + t\sqrt{\frac{3k}{p+1}} + ut \frac{e \sqrt{k+p}}{p+1} \right)
    $$
    with probability at least $1 - 2t^{-p} - e^{-u^2/2} = 1 - 2^{1-p} - e^{-L} \ge 1- \delta/2$.
    The same choice of $C_0$ ensures
$
    1+t\sqrt{\frac{3k}{p+1}}
    +ut\,e\,\frac{\sqrt{k+p}}{p+1}
    \leq 2,
$
and hence
$
    \|\Delta\|_{\mathrm{HS}}
    \leq 2\|\A-\A_k\|_{\mathrm{HS}}$.

Set $\hat{\A}=QQ^*\A QQ^*$. Conditioned on $Q$, the residual samples
are independent of $Q$ and yield the estimator $H_{m/3}(\Delta)$.
Moreover, the choice of $C_0$ ensures
$
    \frac{m}{3}=k+p\geq4\log(4/\delta).
$
We may therefore apply Lemma~\ref{lemma:A-is-Ahat-plus-Delta},
conditioned on $Q$, with $\ell=m/3$ and failure probability $\delta/2$.
It follows that
\[
    |\hat{T}(\A)-\Tr(\A)|
    \leq
    8\sqrt{\frac{\log(4/\delta)}{k(k+p)}}\,\Tr(\A)
    \leq
    8\frac{\sqrt{L}}{k}\,\Tr(\A)
    \leq
    \eps\Tr(\A)
\]
with conditional probability at least $1-\delta/2$.
Combining this with the range-finder event by a union bound proves
\eqref{eq:relaux} with probability at least $1-\delta$.
\end{proof}

\subsection{Implementation}
\label{sec:implementation}

The infHutch and infHutch++ estimators introduced above are idealized, since they require sampling from operator-dependent Gaussian distributions. For practical computation, we need to restrict the random inputs to finite-dimensional subspaces of $\H$.
\subsubsection{Truncated infHutch}

To give a concrete example of how the restriction to finite dimension can be achieved, let us consider an orthonormal basis $\{q_1,q_2,\ldots\}$ of $\H$ and consider the finite-dimensional subspace spanned by the first $n$ vectors $q_1,q_2,\ldots,q_n$.
Choose the random vector model $\hat{x} = \gamma_1 q_1 + \cdots + \gamma_n q_n$ with i.i.d. $\gamma_j \sim N(0,1)$ and consider
\begin{equation} \label{eq:approxhutch}
    \hat{H}_m(\A) := \frac{1}{m} \sum_{i=1}^m \langle \A \hat{x}_i, \hat{x}_i \rangle, 
        \quad \hat{x}_i \sim \hat{x} \text{ i.i.d.}
\end{equation}
Restricting the samples to a finite-dimensional subspaces introduces a truncation \emph{bias}, which we quantify below. Two different views provide further insight into~\eqref{eq:approxhutch}.

On the one hand, we can connect~\eqref{eq:approxhutch} to the classical matrix Girard--Hutchinson estimator. To see this, let $Q_n$ denote the quasimatrix containing the columns $q_1,\ldots,q_n$, and form the discretized matrix $A_n = Q_n^*\A Q_n \in \R^{n \times n}$. Then the estimator $\hat{H}_m(\A)$ coincides with Girard--Hutchinson applied to $A_n$:
\begin{equation}
    \label{eq:hatHm}
    \hat{H}_m(\A) 
        = \frac{1}{m} \sum_{i=1}^m \langle \A Q_n \gamma^{(i)}, Q_n \gamma^{(i)} \rangle
        = \frac{1}{m} \sum_{i=1}^m \langle A_n \gamma^{(i)}, \gamma^{(i)} \rangle = H_m(A_n),
\end{equation}
with i.i.d. $\gamma^{(i)} \sim N(0, I_n)$. 

On the other hand, we can view~\eqref{eq:approxhutch} as an unbiased trace
estimator 
of the compressed operator $\A_n = \PP_n \A \PP_n: \H \to \H$ with $\PP_n = Q_n Q_n^*$ denoting the orthogonal projector onto $\operatorname{span}\{q_1,\ldots,q_n\}$.
The restriction of $\A_n$ to $\operatorname{range}(\PP_n)$ is unitarily
equivalent to the matrix $A_n=Q_n^*\A Q_n$, and hence
$\Tr(\A_n)=\Tr(A_n)$. It follows from the construction of infHutch in
Section~\ref{sec:unbiased-operator-Hutch} and the orthogonal invariance of
Gaussian random vectors that
\begin{equation}
\label{eq:mainrel}
    H_m(\A_n)\stackrel{d}{=}H_m(A_n)
    =\hat H_m(\A).
\end{equation}
For this reason, we refer to~\eqref{eq:approxhutch} as truncated infHutch
using the basis $\{q_1,q_2,\ldots\}$.
Note that, although this procedure can be interpreted as approximating the trace of the matrix $A_n$, the matrix $A_n$ itself is never formed or stored. Instead, the implementation samples $\hat{x}\in\H$, computes $\A\hat{x}$, and evaluates the quadratic form $\langle \A\hat{x},\hat{x}\rangle$.

One consequence of~\eqref{eq:mainrel} is the error bound
\begin{align}        
    |\Tr(\A) - \hat{H}_m(\A)|
        &= |\Tr(\A) - H_m(A_n)| \nonumber \\
        &\leq \Tr(\A) - \Tr(\A_n) + |H_m(A_n) - \Tr(A_n)| \nonumber \\
        &= \Tr( (I- \PP_n)\A (I- \PP_n)) + |H_m(A_n) - \Tr(A_n)|. \label{eq:discretization-triangle-ineq}
\end{align}
The first term of the error bound~\eqref{eq:discretization-triangle-ineq} is the discretization error, which can be made arbitrarily small by choosing $n$ sufficiently large. For example, if $\A$ is an integral operator with a smooth kernel on an interval, choosing an $L^2$-orthonormal basis $q_1, q_2, \ldots$ of (scaled) Legendre polynomials will result in a rapid decay
of $\Tr( (I- \PP_n)\A (I- \PP_n))$ to zero 
as $n$ increases; see~\cite{kressner2025randomizedsvdinfinitedimensions} for a related discussion.
Using $\|A_n\|_{F} \le \|\A\|_{\mathrm{HS}}$
and $\|A_n\|_2 \le \|\A\|$, it follows that the second term of the error bound~\eqref{eq:discretization-triangle-ineq} is governed by the tail bound established in Theorem~\ref{thm:hw-hutch}.

\subsubsection{Truncated infHutch++}
For the Hutch++ variant, the idealized Algorithm~\ref{alg:operator-hutchpp} can be implemented using the same finite-dimensional sampling principle. There are, however, two natural ways to treat the range-finder samples.

First, consider a projected variant. After drawing
\begin{equation}
\label{eq:sample-infhutchpp}
    \hat x=Q_n\gamma\in\operatorname{range}(\PP_n) = \operatorname{span}\{q_1,\ldots,q_n\},
    \qquad
    \gamma\sim N(0,I_n),
\end{equation}
one could use $\PP_n\A\hat x$ as the range-finder sample. In coordinates
with respect to $q_1,\ldots,q_n$, this is simply
$
    Q_n^*\PP_n\A Q_n\gamma=A_n\gamma.
$
If the residual samples are restricted to $\operatorname{range}(\PP_n)$
as well, the resulting method is exactly matrix Hutch++ applied to $A_n$,
or equivalently idealized infHutch++ applied to
$\A_n=\PP_n\A\PP_n$. The additional deterministic bias caused by this
compression is
\[
    \Tr(\A)-\Tr(\A_n)
    =
    \Tr\big((I-\PP_n)\A(I-\PP_n)\big).
\]

However, we do not use this projected variant in the numerical implementation. In a matrix-free operator setting, the oracle naturally returns $\A\hat x$ as an element of the ambient Hilbert space. Projecting this output back to $\operatorname{range}(\PP_n)$ would require computing its coefficients in the chosen sampling basis and would discard the component of $\A\hat x$ outside that space. Thus the projected method approximates the compressed operator $\PP_n\A\PP_n$, while our implementation uses the actual images of the sampled input space under $\A$. This choice is more natural in the \chebfun implementation, but it leads to a different finite-$n$ random sketch.

In the implementation used below, sampling from $N(0,\A\A^*)$ for the columns of $S$ is replaced by drawing $\hat{x}$ as in \eqref{eq:sample-infhutchpp}, and storing $\hat s=\A\hat x$ in $S$.
We also use $\hat H_{m/3}$ in place of $H_{m/3}$ in the last line of Algorithm~\ref{alg:operator-hutchpp}, using fresh Gaussian samples independent of those used to construct $Q$.
We refer to the resulting unprojected implementation as ``truncated infHutch++ using the basis $\{q_1,q_2,\ldots\}$''.

For finite \(n\), the range-finder samples in this unprojected variant have covariance $\operatorname{Cov}(\hat s) = \A\PP_n\A$, while the idealized Algorithm~\ref{alg:operator-hutchpp} uses samples from $N(0,\A^2)$. Thus the range finder used by truncated infHutch++ is not exactly the one analyzed in Theorem~\ref{thm:main}. The two covariances nevertheless converge as $n\to\infty$: since
$\PP_n\to I$ strongly and $\A$ is positive trace class (and therefore Hilbert--Schmidt),
$$
    \|\A\PP_n\A-\A^2\|_{\Tr}
        = \|\A(I-\PP_n)\A\|_{\Tr}
        = \|(I-\PP_n)\A\|_{\mathrm{HS}}^2
        \longrightarrow 0.
$$
This covariance convergence suggests consistency of the finite-dimensional
range finder. Below we show that, for a fixed sample budget, the truncated infHutch++ estimator converges to its idealized counterpart as $n\to\infty$. This does not, however, provide a finite-$n$ Hutch++ error bound.

The error decomposition~\eqref{eq:discretization-triangle-ineq} is specific to truncated infHutch and does not directly apply to the above unprojected Hutch++ variant. To describe the residual term, let $Q$ denote the orthonormal quasimatrix computed from the unprojected range-finder samples, and define $\Delta_Q := (I-QQ^*)\A(I-QQ^*)$. Conditioned on $Q$, the residual estimator uses independent samples $\hat{x}_i \sim N(0,\PP_n)$, and hence estimates $\Tr(\PP_n\Delta_Q\PP_n)$:
$$
    \EE\big[
        \hat H_{m/3}(\Delta_Q) \;|\; Q
    \big]
        = \Tr(\PP_n\Delta_Q\PP_n).
$$
Consequently, if $\hat T_n(\A)$ denotes the output of truncated infHutch++, then
\begin{equation}
\label{eq:condbias} 
    \EE \big[\hat T_n(\A)\mid Q\big]
    = \Tr(Q^*\A Q) + \Tr(\PP_n\Delta_Q\PP_n).
\end{equation}
Since $\Tr(\A) = \Tr(Q^*\A Q) + \Tr(\Delta_Q)$, the conditional bias is
$$
    \Tr(\A) - \EE\big[\hat T_n(\A) \;|\; Q\big]
        = \Tr \big((I-\PP_n)\Delta_Q(I-\PP_n)\big)
        \ge 0.
$$
Equivalently,
$$
    |\Tr(\A)-\hat T_n(\A)|
    \le 
        \Tr\big((I-\PP_n)\Delta_Q(I-\PP_n)\big) 
        + \big| \hat H_{m/3}(\Delta_Q) - \Tr(\PP_n\Delta_Q\PP_n) \big|.
$$
Conditioned on $Q$, the second term is a finite-dimensional
Girard--Hutchinson error and can be bounded using
Theorem~\ref{thm:hw-hutch}. For fixed $Q$, the first term tends to zero
as $n\to\infty$. In the actual algorithm, however, $Q=Q_n$ also depends
on $n$. The following result establishes consistency in this setting,
without providing a finite-$n$ counterpart of Theorem~\ref{thm:main}.

\begin{prop}
\label{prop:truncated-infhutchpp-convergence}
Fix an integer $m$ divisible by $3$. Let $\hat T(\A)$  and $\hat T_n(\A)$ denote the outputs
of infHutch++ and truncated infHutch++ defined above. Then
$\hat T_n(\A)$ converges in distribution to $\hat T(\A)$ and
$
    \EE[\hat T_n(\A)]\longrightarrow \Tr(\A).
$
\end{prop}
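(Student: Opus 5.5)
The plan is to construct all estimators on one probability space so that, as $n\to\infty$, the random ingredients of $\hat T_n(\A)$ converge almost surely to those of $\hat T(\A)$. Convergence in distribution will then follow from the L\'evy continuity theorem, and convergence of the expectations from dominated convergence.

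\emph{Coupling of the range finder.} Take i.i.d.\ standard Gaussians $\omega_{ij}$, $i=1,\dots,m/3$, $j\ge 1$, and set $\hat x_i^{(n)}=\sum_{j\le n}\omega_{ij}q_j$. The $i$-th range-finder sample is then $\hat s_i^{(n)}=\A\hat x_i^{(n)}=\sum_{j\le n}\omega_{ij}\A q_j$. Since $\sum_j\|\A q_j\|^2=\|\A\|_{\mathrm{HS}}^2<\infty$, this is an $L^2$-bounded $\H$-valued martingale in $n$. Hence it converges almost surely to some $s_i$, by the same argument that gave the a.s.\ convergence of $x^{(k)}$ in~\eqref{eq:gauss_inf}. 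The limit $s_i$ is centered Gaussian with covariance $\sum_j \A q_j\langle \A q_j,\cdot\rangle=\A^2$, so $s_i\sim N(0,\A^2)$, as required by Algorithm~\ref{alg:operator-hutchpp}.

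\emph{Convergence of the projector.} Let $S_n$ and $S$ be the corresponding quasimatrices, with orthogonal projectors $\Pi_n=Q_nQ_n^*$ and $\Pi=QQ^*$ onto their ranges. Write $r=\min(m/3,\operatorname{rank}\A)$. Almost surely $S$ has rank exactly $r$, and lower semicontinuity of rank gives $\operatorname{rank}S_n\ge r$ for large $n$. Moreover $\operatorname{rank}S_n\le r$ always, since the columns lie in $\operatorname{range}(\A)$. The Gram matrices $S_n^*S_n$ converge to $S^*S$, and their nonzero spectra stay bounded away from zero. Therefore $\Pi_n=S_n(S_n^*S_n)^{+}S_n^*\to\Pi$ in operator norm almost surely.

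I expect this step to be the main technical point. In particular, the finite-rank case $\operatorname{rank}\A<m/3$ must be handled through the pseudoinverse rather than through an orthonormalization of a full-rank $S$.

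\emph{Passing to the limit.} Operator-norm convergence of $\Pi_n$, combined with $\A$ being trace class, gives the following limits:
\begin{itemize}
\item $\Tr(Q_n^*\A Q_n)=\Tr(\A\Pi_n)\to\Tr(\A\Pi)$;
\item $\Delta_{Q_n}\to\Delta_Q$ in trace norm;
\item $K_n:=\PP_n\Delta_{Q_n}\PP_n\to\Delta_Q$ in trace norm, using $\PP_n\to I$ strongly and the fact that strong convergence composed with a trace-class operator yields trace-norm convergence.
\end{itemize}

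For the residual term, conditioned on the range-finder samples, $\hat H_{m/3}(\Delta_{Q_n})$ is an average of $\ell=m/3$ independent quadratic forms $\langle K_n\hat x,\hat x\rangle$ with Gaussian $\hat x$. Its conditional characteristic function is therefore $\det(I-2\ii tK_n/\ell)^{-\ell/2}$, a Fredholm determinant. In the idealized algorithm, $\|(I-QQ^*)g\|^2$ with $g\sim N(0,\A)$ has the law $\sum_j\lambda_j(\Delta_Q)\chi^2_{1,j}$, so its conditional characteristic function is $\det(I-2\ii t\Delta_Q/\ell)^{-\ell/2}$. The Fredholm determinant is continuous in trace norm. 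Consequently
\[
\E\big[e^{\ii t\hat T_n(\A)}\big]=\E\Big[e^{\ii t\Tr(\A\Pi_n)}\det\big(I-\tfrac{2\ii t}{\ell}K_n\big)^{-\ell/2}\Big]
\]
converges to the same expression with $\Pi$ and $\Delta_Q$, by dominated convergence, since the integrand has modulus at most $1$. That limit is the characteristic function of $\hat T(\A)$, and L\'evy's continuity theorem yields $\hat T_n(\A)\to\hat T(\A)$ in distribution.

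\emph{Expectations.} By~\eqref{eq:condbias}, $\E[\hat T_n(\A)]=\E[\Tr(\A\Pi_n)+\Tr(K_n)]$. The integrand is bounded by $2\Tr(\A)$ and converges almost surely to $\Tr(\A\Pi)+\Tr(\Delta_Q)=\Tr(\A)$. Dominated convergence then gives $\E[\hat T_n(\A)]\to\Tr(\A)$.
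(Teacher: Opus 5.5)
Your proof is correct and follows the paper's skeleton: the same coupling $s_i^{(n)}=\sum_{j\le n}\omega_{ij}\A q_j$ of the range finder, trace-norm convergence of $\PP_n\Delta_{Q_n}\PP_n$ to $\Delta_Q$, and dominated convergence via~\eqref{eq:condbias} for the expectations. It differs from the paper in two intermediate steps.

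\emph{Projectors.} When $\operatorname{rank}(\A)\ge m/3$, the paper cites a continuity lemma for range projectors from the randomized-SVD reference, and it treats the rank-deficient case separately. Your constant-rank pseudoinverse argument covers both cases at once and is self-contained.

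\emph{Residual term.} The paper couples the residual samples as well. With i.i.d.\ $\eta_{ij}$, it writes the truncated and idealized residual samples as quadratic forms of $\PP_n\Delta_{Q_n}\PP_n$ and $\Delta_Q$ in the same Gaussian coefficients, the latter as a mean-square limit. The variance formula for Gaussian quadratic forms then bounds the conditional second moment of their difference by $3\|\PP_n\Delta_{Q_n}\PP_n-\Delta_Q\|_{\Tr}^2$. This gives the stronger statement $\hat T_n(\A)-\hat T(\A)\to0$ in probability on the coupled space, together with a quantitative $L^2$ estimate. The price is constructing the infinite quadratic form and identifying its law. You instead condition on the range finder, compare conditional characteristic functions, and apply L\'evy's continuity theorem. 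This needs only finite-dimensional Gaussian quadratic forms and yields exactly the claimed convergence in distribution, but nothing stronger.

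One detail needs tightening. When $\ell=m/3$ is odd, $\det(I-2\ii tK/\ell)^{-\ell/2}$ involves a square root, and continuity of the Fredholm determinant alone does not fix the branch. Write the conditional characteristic function as $\prod_j(1-2\ii t\lambda_j(K)/\ell)^{-\ell/2}$ with principal branches; every factor has real part $1$. Then obtain its convergence from $\sum_j|\lambda_j(K_n)-\lambda_j(\Delta_Q)|\le\|K_n-\Delta_Q\|_{\Tr}$ for decreasingly ordered eigenvalues, together with $|\log(1-2\ii sa)-\log(1-2\ii sb)|\le 2|s|\,|a-b|$ for $a,b\ge 0$.
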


\begin{proof}
Set $r=m/3$. 
We couple the range-finder samples
by taking i.i.d.\ standard Gaussian variables $\omega_{ij}$ and setting
\[
    s_i^{(n)}
        = \sum_{j=1}^n \omega_{ij}\A q_j,
    \qquad
    s_i
        = \sum_{j=1}^\infty \omega_{ij}\A q_j,
    \qquad i=1,\ldots,r.
\]
Then $s_i^{(n)}$ and $s_i$ have covariances $\A\PP_n\A$ and $\A^2$,
respectively. Since $\A$ is Hilbert--Schmidt,
$\EE\|s_i^{(n)}-s_i\|^2 \to 0$ and $s_i^{(n)}\to s_i$ almost surely.

Let $R_n$ and $R$ denote the orthogonal projectors onto the spans of
$s_1^{(n)},\ldots,s_r^{(n)}$ and $s_1,\ldots,s_r$, respectively.
If $\operatorname{rank}(\A)\ge r$, the limiting samples are linearly
independent almost surely, and continuity of the corresponding range
projector, see~\cite[Lemma~10]{kressner2025randomizedsvdinfinitedimensions},
gives
$\|R_n-R\|\to0$ almost surely.
If $\operatorname{rank}(\A)<r$, both sketches span
$\operatorname{range}(\A)$ almost surely for all sufficiently large $n$,
and the same conclusion follows.

Define
\[
    \Delta_n=(I-R_n)\A(I-R_n),
    \qquad
    \Delta=(I-R)\A(I-R),
    \qquad
    \mathcal B_n=\PP_n\Delta_n\PP_n.
\]
Then
$\|\Delta_n-\Delta\|_{\Tr}
        \le 2\|\A\|_{\Tr}\|R_n-R\|
        \to 0$
almost surely. Since $\Delta$ is trace class and $\PP_n\to I$ strongly,
also
\[
    \|\mathcal B_n-\Delta\|_{\Tr}\to 0
\]
and $\Tr(R_n\A)\to\Tr(R\A)$ almost surely.

For the residual estimators, let $\eta_{ij}\sim N(0,1)$ be i.i.d.,
independently of $\omega_{ij}$. For $i=1,\ldots,r$, define
\[
    Y_{i,n}
    =
    \sum_{j,k=1}^n
    \eta_{i,j}\eta_{i,k}
    \langle \mathcal B_n q_j,q_k\rangle, \qquad 
    Y_{i}
    =
    \lim_{N\to \infty} 
    \sum_{j,k=1}^N
    \eta_{i,j}\eta_{i,k}
    \langle  \Delta q_j,q_k\rangle, 
\]
where the limit is in mean square.
Then $Y_{i,n}$ is exactly one sample used by the truncated residual, while
$Y_i$ has the same distribution as $\|x_i\|^2$ for $x_i\sim N(0,\Delta)$, that is,
as one sample of the idealized residual estimator.

Applying the well-known variance formula for stochastic trace estimators (see, e.g.,~\cite[Lemma~9]{AvronToledo11}), we obtain
\[
     \EE\left[
        |Y_{i,n}-Y_i|^2
        \,\middle|\,R_n,R
    \right]
    =
    2\|\mathcal B_n-\Delta\|_{\mathrm{HS}}^2
    + \big( \Tr(\mathcal B_n-\Delta) \big)^2
    \leq
    3\|\mathcal B_n-\Delta\|_{\Tr}^2.
\]
Hence $Y_{i,n}-Y_i$ converges to zero in probability. Applying this
argument to the finitely many residual samples and combining it with
the convergence of the low-rank trace term shows, under this coupling,
that
$
    \hat T_n(\A)-\hat T(\A)\to 0
$
in probability and, hence, $\hat T_n(\A)$ converges to $\hat T(\A)$ in distribution.

Finally, using~\eqref{eq:condbias},
\[
    \Tr(\A)-\EE[\hat T_n(\A)]
    =
    \EE\!\left[
        \Tr\big((I-\PP_n)\Delta_n(I-\PP_n)\big)
    \right].
\]
The quantity inside the expectation is
$
    \Tr(\Delta_n)-\Tr(\mathcal B_n)
    \to
    \Tr(\Delta)-\Tr(\Delta)=0
$
almost surely. It is nonnegative and bounded by $\Tr(\A)$, so dominated
convergence yields
$
    \EE[\hat T_n(\A)]\longrightarrow\Tr(\A).
$
\end{proof}

\subsubsection{Adaptive choice of sampling dimension}

    The discussion above assumes that the sampling dimension $n$ is fixed before the Gaussian samples used in the trace estimator are drawn. In order to set the value of $n$, we consider an adaptive heuristic algorithm. Note that if $n$ is chosen separately for each sample, then the resulting output would no longer be distributed as $H_m(A_n)$ for a deterministic compression $A_n$. For this reason, we propose to use adaptivity only as a pilot procedure for selecting a basis dimension. After a dimension $n_*$ has been selected by this pilot procedure, the trace estimators are run with this fixed dimension and with fresh independent Gaussian samples. Conditional on the selected value of $n_*$, the fixed-$n$ interpretation from Section~\ref{sec:implementation} therefore applies to the samples used in the reported trace estimates.

    The pilot procedure is based on monitoring the change in quadratic forms. Let
    $$
        \xi_j^{(2n)}
            = \sum_{i=1}^{2n}\omega_{j,i}q_i, 
        \quad
        \delta_j^{(n)}
            = \sum_{i=n+1}^{2n}\omega_{j,i}q_i,
        \quad 
        \omega_{j,i} \sim N(0,1) \; \text{i.i.d.},
    $$
    for $j = 1, \ldots, m_{\text{pilot}}$. These are finite Gaussian expansions in the chosen basis, and the contribution $\langle \A\delta_j^{(n)}, \delta_j^{(n)}\rangle$ has expectation
    $$
        \EE \left[ \langle \A \delta_j^{(n)}, \delta_j^{(n)}\rangle \right]
        = \Tr \big((\PP_{2n}-\PP_n) \A (\PP_{2n}-\PP_n)\big).
    $$
    Thus $\langle \A\delta_j^{(n)}, \delta_j^{(n)}\rangle$ is an unbiased estimator of the trace contribution of the newly added basis block. Note that only finite expansions are used in the pilot procedure; no limiting $\H$-valued standard Gaussian element is required.

    Algorithm~\ref{alg:dynamic-sampling} stops when the empirical contribution of the newest block is small relative to the empirical quadratic form on the enlarged space. In the experiments below, we choose the basis-resolution tolerance $\tau_{\text{basis}}$ as a decreasing function of the sample budget $m$, reflecting that smaller stochastic error makes basis-truncation effects more visible. The criterion is an empirical heuristic: it does not provide a high-probability bound for the full tail $\Tr(\A)-\Tr(\PP_n \A \PP_n)$, and it cannot certify unresolved contributions beyond the last inspected block without additional decay assumptions. The use of several pilot samples is intended to reduce the chance of stopping because of a single atypically small realization. Algorithm \ref{alg:dynamic-sampling} proved reliable in numerical experiments of the next section; we report some further details only for the first experiment.

    \begin{algorithm}[ht]
    \caption{Pilot selection of basis dimension $n$}
    \label{alg:dynamic-sampling}
    \begin{algorithmic}[1]
    \Require operator-vector oracle $\A$; initial dimension $n_0$; maximum dimension
    $n_{\max}$; basis-resolution tolerance $\tau_{\text{basis}}$; number of pilot samples $m_{\text{pilot}}$
    \Ensure selected basis dimension $n_*$
    \State $n \leftarrow n_0$
    \For{$j=1,\ldots,m_{\text{pilot}}$}
        \State Draw independent $\omega_{j,1},\ldots,\omega_{j,n}\sim N(0,1)$
        \State Set $\xi_j^{(n)} \leftarrow \sum_{i=1}^{n}\omega_{j,i} q_i$
    \EndFor
    \vspace*{0.5em}
    \Repeat
        \State $n_{\text{new}} \leftarrow \min\{2n,n_{\max}\}$
        \For{$j=1,\ldots,m_{\text{pilot}}$}
            \State Draw independent \(\omega_{j,n+1},\ldots,\omega_{j,n_\text{new}}\sim N(0,1)\)
            \State \rule{0pt}{1.2em}Set
            $
                \delta_j^{(n)} 
                    \leftarrow \sum_{i=n+1}^{n_\text{new}} \omega_{j,i} q_i,
                \quad
                \xi_j^{(n_{\text{new}})} 
                    \leftarrow \xi_j^{(n)} + \delta_j^{(n)}
            $
        \EndFor
        \If{\rule{0pt}{1.2em}$\frac{1}{m_{\text{pilot}}} \sum_{j=1}^{m_{\text{pilot}}} \langle \A\delta_j^{(n)},\delta_j^{(n)}\rangle \leq \tau_{\text{basis}} \cdot \frac{1}{m_{\text{pilot}}} \sum_{j=1}^{m_{\text{pilot}}} \langle \A\xi_j^{(n_{\text{new}})},\xi_j^{(n_{\text{new}})}\rangle$}
            \State \rule{0pt}{1.2em}\Return $n_* = n_{\text{new}}$
        \Else
            \State $n \leftarrow n_{\text{new}}$
        \EndIf
    \Until{$n \geq n_{\max}$}
    \State \Return $n_* = n_{\max}$
    \end{algorithmic}
    \end{algorithm}

\section{Numerical examples}

\label{sec:experiments}

In this section, we compare the proposed estimators, infHutch and infHutch++, with the corresponding estimators from \cite{zvonek2024conthutchstochastictraceestimation}, denoted ContHutch and ContHutch++. In the numerical experiments, infHutch($n$) and infHutch++($n$) denote the truncated implementations from Section \ref{sec:implementation}, using Legendre polynomials up to and including degree n (i.e.,  using $n+1$ basis functions), while ContHutch($\ell$) and ContHutch++($\ell$) denote the methods from \cite{zvonek2024conthutchstochastictraceestimation} using a squared-exponential covariance kernel with length-scale $\ell$.
All algorithms have been implemented in MATLAB R2022b using the \chebfun package \cite{chebfun}, which greatly simplifies operations with quasimatrices and operators. The code to reproduce the numerical experiments is publicly available at \href{https://github.com/PMF-ZNMZR/infHutch}{https://github.com/PMF-ZNMZR/infHutch}.

To compare the different algorithms, we use the number of operator applications as the primary cost metric. For Girard--Hutchinson this equals the number $m$ of random samples. For Hutch++, $m$ denotes the total operator application budget; $m/3$ random samples are used in each of the range-finder and residual stages. We also report the degree of the Chebyshev polynomials used internally to represent functions and operators in the numerical examples.
Since the computational cost of operations performed by \chebfun scales with the polynomial degree, it is beneficial to keep this degree as small as possible. 

Throughout the numerical experiments, a random sample means one independently generated random function. For the Girard--Hutchinson estimators, $m$ denotes the number of random samples and also the number of quadratic-form evaluations
$\langle \A x_i, x_i \rangle$; in the integral-operator examples this equals $m$ applications of the target operator. For the Hutch++ estimators, $m$ denotes the total number of target-operator applications. These estimators generate $m/3$ random samples for the randomized range finder and $m/3$ random samples for the residual trace estimator, while the remaining $m/3$ operator applications are used to evaluate the deterministic low-rank trace term. When the target operator is a rational function of a differential operator, one application of the target operator requires several shifted linear solves. We therefore use random-sample counts, equivalently target-operator applications, as the primary stochastic trace-estimation metric, and separately report discretization sizes as an indication of the cost of each application.

In the first two numerical experiments, we show the effectiveness of our approach by reconstructing two examples from~\cite{zvonek2024conthutchstochastictraceestimation}. In the third experiment,
we apply trace estimation to a rational spectral filter for an operator with a gap in its essential spectrum. The particular application here is to estimate the number of eigenvalues of an operator in a given interval.

\subsection{Integral kernel operators}

For a given kernel function $f : \Omega \times \Omega \to \R$ on a compact domain $\Omega \subseteq \R$, we are interested in computing the trace of the operator $F:L^2(\Omega) \to L^2(\Omega)$ defined by
$$
[Fu](x) := \int_\Omega f(x,y) u(y) \, dy.
$$
In applications, the kernel $f$ need not be known explicitly; instead, one may only have access to the operator action $u \mapsto Fu$.
For example, $f$ might be the Green's function associated with a differential operator---in that case, the evaluation of the operator $F$ reduces to solving a differential equation.

To demonstrate the convergence of our estimators, as well as to compare their performance to ContHutch, we test them by using two kernel functions given explicitly as in \cite{zvonek2024conthutchstochastictraceestimation}: a Helmholtz-like kernel
\begin{align*}
    f(x,y) 
        &= \frac{1}{1+e^{5(x-y)}}\left[ 1-\cos\left(\frac{\pi(x+1)}{4}\right)\sin\left(\frac{\pi(y+1)}{4}\right)\right] \\ 
        &+ \frac{1}{1+e^{5(y-x)}}\left[ 1 - \cos\left(\frac{\pi(y+1)}{4}\right)\sin\left( \frac{\pi(x+1)}{4}\right)\right],
\end{align*}

and the linear combination of three sinc functions
$$
    f(x,y) = \text{sinc}(x-y) + \frac{1}{2}\text{sinc}(10(x-y)) + \frac{1}{4}\text{sinc}(50(x-y)).
$$
Both kernels are symmetric and positive definite.

In the context of our algorithms, the Hilbert space $\H$ is $L^2([-1, 1])$ with the standard integral inner product. In this particular case, with a known kernel function, we can compute the eigenvalue decomposition of the integral operator $F$ using \chebfun. This allows us to generate random vectors $x^{(k)}$ as in \eqref{eq:gauss_inf} and run idealized infHutch as described in Section \ref{sec:unbiased-operator-Hutch}.

We use idealized infHutch as the benchmark for the truncated infHutch($n$) approximations proposed in Section \ref{sec:implementation}, and take Legendre polynomials over the interval $[-1, 1]$ as the orthonormal basis $\{q_1, q_2, \ldots\}$. This allows for efficient manipulation within \chebfun, as linear combinations of Legendre polynomials are easily converted to linear combinations of Chebyshev polynomials internally used by \chebfun. Accuracy of the algorithm is controlled simply by adjusting the polynomial degree. 
In contrast, ContHutch($\ell$) generates samples from the Karhunen--Lo\`eve (K--L) decomposition of the Gaussian process with squared-exponential covariance kernel $K_{\mathrm{SE}}$.
Increasing the accuracy by decreasing $\ell$ requires recomputing the K--L decomposition, introducing an additional setup cost that is not present when only changing the finite-dimensional basis size $n$. Furthermore, each random sample may have a high polynomial degree when stored using the Chebyshev basis.

\begin{figure}[h]
    \begin{minipage}{0.50\textwidth}
        \includegraphics[width=1\linewidth]{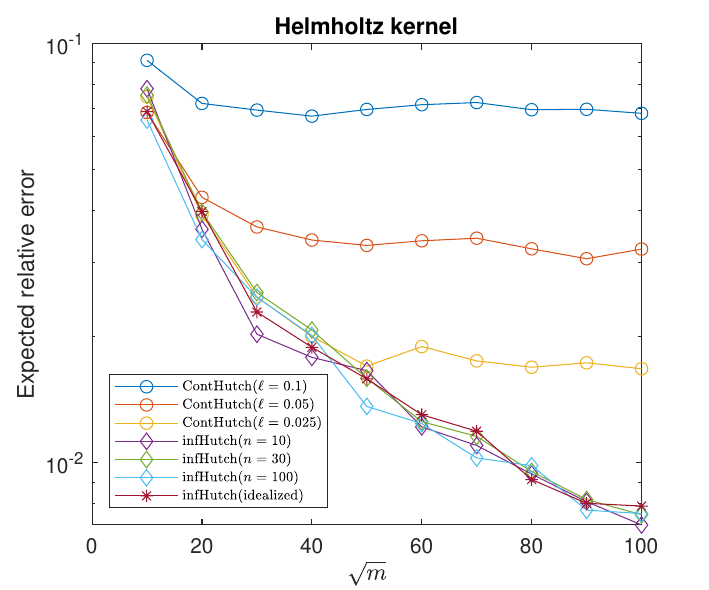}
    \end{minipage}
    \hfill
    \begin{minipage}{0.50\textwidth}
        \includegraphics[width=1\linewidth]{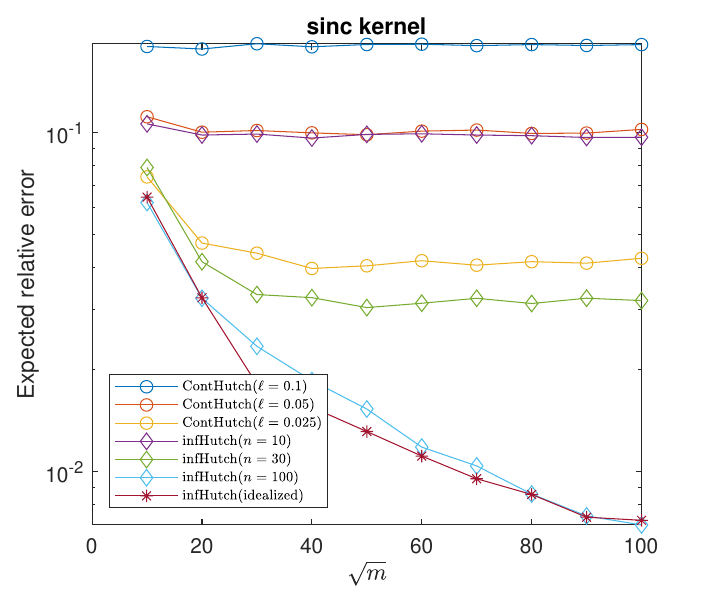}
    \end{minipage}
    \caption{Performance comparison of Girard--Hutchinson's algorithm for different sampling methods on the Helmholtz-like kernel (left panel) and the sinc kernel (right panel). The truncated infHutch($n$) estimators use the Legendre polynomials up to and including degree $n$. On the vertical axis we plot the empirical approximation to $\EE[\frac{|\Tr(F) - \hat T_m|}{\Tr(F)}]$,
    where $\hat T_m$ denotes the trace estimate produced by the plotted method, by averaging the errors over $100$ runs.
    On the horizontal axis we plot $\sqrt{m}$, as the relative error is expected to decay at the Monte Carlo rate $\mathcal{O}(m^{-1/2})$, see Theorem \ref{thm:hw-hutch}.
    }
    \label{fig:hutch-comparison-kernels}
\end{figure}

Figure \ref{fig:hutch-comparison-kernels} shows the behavior of the described sampling approaches for the Girard--Hutchinson estimator. The mathematical idealized infHutch estimator has no tuning parameter. In these experiments, however, we implement it only as a benchmark by truncating the eigenfunction expansion of $F$. The truncation level was chosen sufficiently large that the resulting truncation error is negligible compared with the Monte Carlo errors shown in Figure \ref{fig:hutch-comparison-kernels}. With this implementation, idealized infHutch shows best performance among all algorithms, requiring the least number of random samples in order to reach certain level of relative error in trace estimation. 
The quality of the other estimators is affected by the parameter choice. It may be difficult to select the proper value of the parameter $\ell$ used in the ContHutch($\ell$): if it is too large, the sampled functions will be approximately constant, limiting the achievable accuracy, while if
if $\ell$ is too small, the resolution required to represent and apply the operator to the sampled functions increases dramatically. At first glance, the same issue may appear to be a problem for truncated infHutch($n$) using Legendre polynomials as well, since taking $n$ too small might decrease accuracy, while taking $n$ too large would slow down the computation unnecessarily. To address this issue, as a separate check we used Algorithm~\ref{alg:dynamic-sampling} as a pilot procedure for selecting a suitable finite sampling dimension. We used $m_{\text{pilot}} = 10$ and set the tolerance $\tau_{\text{basis}}$ to $10^{-2}$ for $100$ samples and $10^{-3}$ for $10000$ samples (and interpolated in between: $\tau_{\text{basis}}(m) = 0.1 m^{-1/2}$). Starting from degree $10$, the largest degree selected by Algorithm \ref{alg:dynamic-sampling} over all runs was $20$ for the Helmholtz kernel and $160$ for the sinc kernel. These pilot results are consistent with the fixed-$n$ behavior shown in Figure~\ref{fig:hutch-comparison-kernels}: the fixed-$n$ truncated infHutch estimators using the selected dimensions reach the accuracy of the idealized benchmark. Note that the idealized infHutch can be implemented in this case, but not generally, so we use it only as a benchmark.

Table \ref{table:degree-comparison} compares the degrees of Chebyshev polynomials used internally in \chebfun to represent the random sample functions for selected values of $n$ and $\ell$. Combined with the error curves in Figure \ref{fig:hutch-comparison-kernels}, it shows that parameter choices giving comparable accuracy can require substantially lower Chebyshev degrees for truncated infHutch($n$) than for ContHutch($\ell$), suggesting lower per-sample representation and operator-application cost. Note that increasing the degree of Legendre polynomials over $100$ does not further improve accuracy in trace estimation for either kernel. 
For the benchmark implementation of idealized infHutch, \chebfun represented the resulting random sample functions with Chebyshev polynomials of degree $50$ for the Helmholtz kernel and degree $80$ for the sinc kernel. The samples were generated using the dominant $20$ eigenfunctions in the Helmholtz case and the dominant $40$ eigenfunctions in the sinc case. These truncation levels were chosen so that the omitted eigenvalue tail was negligible relative to the Monte Carlo errors reported here.

\begin{table}[h]
    \centering
    \begin{tabular}{c|c}
        truncated infHutch($n$) & ContHutch($\ell$) \\
        \hline
        $n = 10 \rightarrow \text{deg} = 10$ & $\ell = 0.1 \rightarrow \text{deg} = 94$ \\
         $n = 30 \rightarrow \text{deg} = 30$ & $\ell = 0.05 \rightarrow \text{deg} = 177$ \\
         $n = 100 \rightarrow \text{deg} = 100$ & $\ell = 0.025 \rightarrow \text{deg} = 349$
    \end{tabular}
    \caption{Comparison of degrees of Chebyshev polynomials (denoted as $\mathrm{deg}$) used internally in \chebfun for representing the random samples. 
    }
    \label{table:degree-comparison}
\end{table}

Table~\ref{table:error-comparison} reports the corresponding trace-estimation errors for the sinc kernel.

\begin{table}[h]
    \centering
    \begin{tabular}{c|c}
        truncated infHutch($n$) & ContHutch($\ell$) \\
        \hline
        $n = 10 \rightarrow 9.6905 \cdot 10^{-2}$ & $\ell = 0.1 \rightarrow 1.8174 \cdot 10^{-1}$ \\
         $n = 30 \rightarrow 3.1967 \cdot 10^{-2}$ & $\ell = 0.05 \rightarrow 1.0219 \cdot 10^{-1}$ \\
         $n = 100 \rightarrow 6.9704 \cdot 10^{-3}$ & $\ell = 0.025 \rightarrow 4.2606 \cdot 10^{-2}$  
    \end{tabular}
    \caption{
    Relative errors in trace estimation for the sinc kernel at $m=10^4$ samples, averaged over $100$ runs, for the same parameter values as in Table \ref{table:degree-comparison}. The rows are not intended to represent matched-accuracy parameter pairs; the table reports the accuracy obtained for the listed parameter choices.
    }
    \label{table:error-comparison}
\end{table}

\subsection{Density-of-states}
In this numerical experiment we reproduce Example 1 from \cite{zvonek2024conthutchstochastictraceestimation}. For the reader's convenience, we quickly recall a few details on the computation of operator density of states proposed in \cite{colbrook2020computingspectralmeasuresselfadjoint}.
For that purpose, operator trace estimation is combined with rational smoothing kernels. The operator we are interested in is the Schrödinger operator on a finite interval:
\begin{equation*}
    [\mathcal{L}u](x) = [-\Delta + v(x)]u(x), \quad x \in [-L,L],
\end{equation*}
where $\Delta$ is the Laplace operator, $v(x)$ is the potential function and $u(x)$ satisfies homogeneous Dirichlet on the boundary of the interval. On a finite segment $[-L,L]$ the density-of-states measure is
\begin{equation*}
    \rho_L(\lambda) = \frac{1}{2L} \sum_{k=1}^\infty \delta(\lambda - \lambda_k)
\end{equation*}
where $\lambda_1, \lambda_2, \ldots$ are the eigenvalues of $\mathcal{L}$ and $\delta(\cdot)$ is the Dirac delta measure with unit mass at the origin. As $L \to \infty$, the finite-interval density-of-states measures converge, under suitable assumptions, to the density-of-states measure of the Schr\"{o}dinger operator on the real line. In the free-particle case considered below, this limiting measure is absolutely continuous and has a piecewise smooth density $\rho$. 

As proposed in \cite{colbrook2020computingspectralmeasuresselfadjoint}, convolution with high-order rational kernels can be used to numerically approximate the density-of-states: for a $K$-th order rational kernel with prescribed simple poles $p_1,...,p_K$ and residues $r_1,...,r_K$, the convolution results in the approximation
\begin{equation}
    \label{eq:desity-of-states-convolution}
    \rho_L^{(K)}(\lambda) = \frac{1}{2L} \Tr\left[\imag\left(\sum_{j=1}^K r_j \cdot (\mathcal{L}-\lambda+p_j\sigma)^{-1}\right)\right].
\end{equation}

We compare the trace estimators by applying them to the operator inside the trace in \eqref{eq:desity-of-states-convolution}.
We consider the free particle operator ($v(x)=0$), take $L=150$ and rational kernels of orders $K=2$ and $K = 6$ with smoothing parameter $\sigma=0.2$; the kernel poles and residues are listed in \cite[Table 5.1]{colbrook2020computingspectralmeasuresselfadjoint}. The density is evaluated at $\lambda=1$. The reference values for the traces are computed directly from the known finite-interval eigenvalues, by evaluating the rational expression in \eqref{eq:desity-of-states-convolution} as a deterministic spectral sum rather than by stochastic trace estimation.
Since our goal is to assess the trace estimators, we compare the relative errors against these reference values for $\rho_L^{(K)}(\lambda)$, rather than against $\rho_L(\lambda)$ as in \cite[Figure 6.3]{zvonek2024conthutchstochastictraceestimation}.

\begin{figure}[H]
    \begin{minipage}{0.50\textwidth}
        \includegraphics[width=1\linewidth]{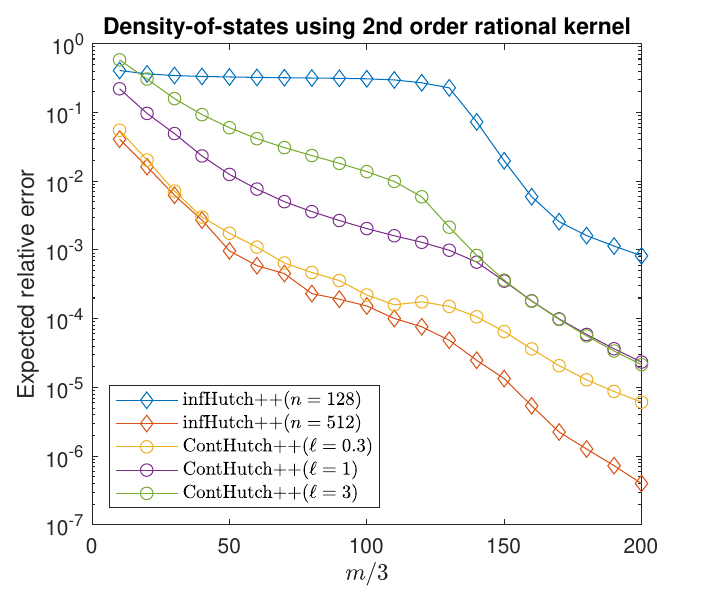}
    \end{minipage}
    \begin{minipage}{0.50\textwidth}
        \includegraphics[width=1\linewidth]{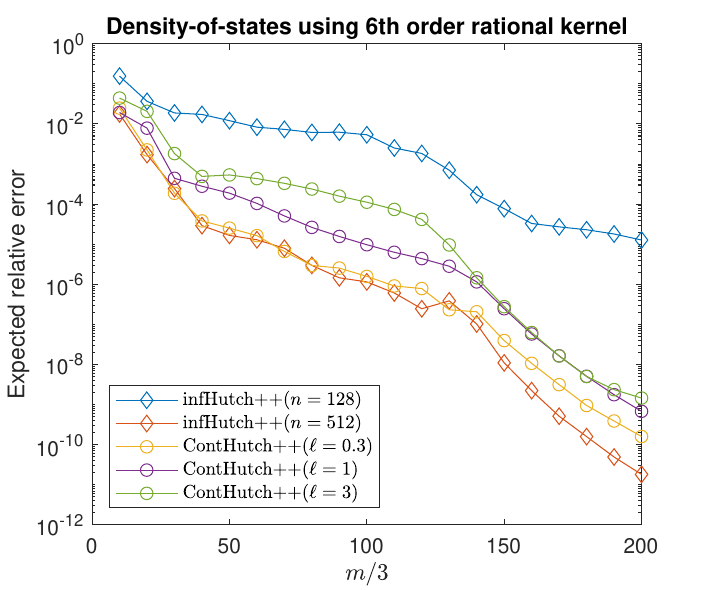}
    \end{minipage}
    \caption{Performance comparison of infHutch++ and ContHutch++ for estimating $\rho_L^{(K)}(1)$ for $K=2, 6$. On the vertical axis we plot the empirical approximation to $\EE[\frac{|\rho_L^{(K)}(1) - \hat T_m|}{\rho_L^{(K)}(1)}]$, where $\hat T_m$ denotes the trace estimate produced by the plotted method, by averaging the errors over $30$ runs for $K=2$, and over $10$ runs for $K=6$ (the reduced number of runs is due to high computational runtime of each run).}
    \label{fig:dos-comparison}
\end{figure}

Figure \ref{fig:dos-comparison} shows the comparison of the ContHutch++($\ell$) algorithm and the truncated infHutch++($n$) variant of the algorithm described in Section \ref{sec:implementation}. Once again we use Legendre polynomials for the interval $[-L, L]$ as the orthogonal basis $\{q_1, q_2, \ldots\}$. Random samples that use linear combinations of such polynomials of degree up to $512$ reach the smallest attainable relative errors for each given sample size; degree $128$ does not suffice for that purpose. As in the previous example, the procedure described in Algorithm \ref{alg:dynamic-sampling} can be used as a pilot procedure to select a finite sampling dimension before running truncated infHutch++ with fixed $n$. By adjusting the length-scale parameter $\ell$ in ContHutch++($\ell$), we can reach a similar level of accuracy (note, \cite{zvonek2024conthutchstochastictraceestimation} uses $\ell=0.3$). However, Table \ref{table:dicr-operator-size} lists the matrix sizes used internally by \chebfun in order to discretize the operators when using different sampling methods. Samples generated by the ContHutch++($\ell$) algorithm are represented by Chebyshev polynomials of far higher degree, which in turn requires finer discretization and larger linear systems, suggesting substantially higher computational cost while having similar or lower accuracy.

\begin{table}[h]
    \centering
    \begin{tabular}{c|c}
        infHutch++($n$) & ContHutch++($\ell$) \\
        \hline
        $n = 128 \rightarrow \text{size}=512$  
            & $\ell = 0.3 \to \text{size}=4096$ \\
        $ n = 512 \rightarrow \text{size}=724$ & $\ell = 0.1 \to \text{size}=4096$
    \end{tabular}
    \caption{Comparison of typical internal \chebfun matrix sizes used for operator discretization with the two operator Hutch++ variants. The size may slightly vary depending on the sample and the phase of the Hutch++ algorithm. }
    \label{table:dicr-operator-size}
\end{table}

\subsection{Quadrature-based spectral filtering} 
In the final example, we compute the spectral filter on an operator with the goal of approximating the number of its eigenvalues in a gap in its essential spectrum.
Consider the radial Dirac operator
$$
    \mathcal{D} = 
        \mb{cc}
            1 + V(r) & -\frac{d}{dr} + \frac{\kappa}{r} \\
            \frac{d}{dr} + \frac{\kappa}{r} & -1 + V(r)
        \me,
$$
where $V : [0, \infty) \to \R$ is the potential. We set the channel index $\kappa=-1$, and study short-range potentials 
 $$V\in L^\infty_0([0, \infty) )=
 \left\{W \in L^\infty([0,\infty) )~:~\lim_{R\to\infty}\|W \mathbf{1}_{\{r>R\}}\|_{L^\infty} = 0\right\}.
 $$
For the model problems in this section nothing will be lost if we assume a more stringent requirement $V\in C_\infty([0,\infty) ) \cap L^\infty_0([0, \infty) )$.

It is known that in this case the essential spectrum is $(-\infty, -1] \cup [1, \infty)$, and that the discrete spectrum lies within $(-1, 1)$ \cite[Section 1.4.3]{Thaller1992Dirac}. For a more mathematical (algebraic) treatment, one can utilize the spectral theory of block operator matrices \cite{Tretter08}. Note that the key structural feature of the potential $V\in L^\infty_0([0, \infty))$ is that it is a relatively compact potential with respect to the free (unperturbed) Dirac operator and, by Weyl's theorem on relatively compact perturbations \cite[Theorem 6.19]{teschl2014qm}, $V$ cannot perturb its essential spectrum. If $V\leq 0$ this perturbation can only move eigenvalues embedded inside the essential spectrum and depending on $\|V\|_\infty$ we will see an increased number of eigenvalues appearing in the interval $(-1, 1)$. Our goal is to estimate the number of eigenvalues in $(-1, 1)$, which we denote as $\num_V$.
For the Gaussian-well potentials considered below, this number is finite.

Counting the eigenvalues can be accomplished by computing the trace of the orthogonal projection operator
\begin{equation}
    P_{\Gamma} := \frac{1}{2\pi \ii} \int_\Gamma (z I - \mathcal{D})^{-1}\, dz,
\end{equation}
where $\Gamma$ is a positively oriented contour enclosing the isolated eigenvalues to be counted and no other part of the spectrum. Then $\Tr(P_{\Gamma})$ equals the number of eigenvalues of $\mathcal{D}$ within the contour $\Gamma$.

In the computation we first truncate the problem to the interval $[\epsilon, R]$. Let $\mathcal{D}_{\epsilon, R}$ denote the resulting self-adjoint Dirac operator with the boundary conditions specified below. The corresponding finite-interval projection is
\begin{equation}
    \label{eq:projector}
    P_{\epsilon, R, \Gamma}
    = \frac{1}{2\pi \ii} \int_\Gamma (z I - \mathcal{D}_{\epsilon, R})^{-1}\, dz .
\end{equation}
Since $\mathcal{D}_{\epsilon, R}$ has discrete spectrum, this projection is finite rank whenever $\Gamma$ encloses finitely many eigenvalues of $\mathcal{D}_{\epsilon, R}$, and $\Tr(P_{\epsilon, R, \Gamma})$ equals the number of finite-interval eigenvalues enclosed by $\Gamma$. To approximate the integral in \eqref{eq:projector} we use a $Q$-point quadrature on the contour. If we denote the quadrature nodes and weights respectively as $z_i$ and $w_i$, $i=1, \ldots, Q$, this induces the operator
$$
    \A_{\epsilon, R, Q}        
        = \rho(\mathcal{D}_{\epsilon, R})
        := \sum_{j=1}^Q w_j (z_j I - \mathcal{D}_{\epsilon, R})^{-1}.
$$
Here the rational function $\rho$ of order $Q$ with poles $z_1, \ldots, z_Q$ is called a filter; an ideal filter for $P_{\epsilon,R,\Gamma}$ is the indicator function of the part of the real line enclosed by $\Gamma$. This technique of eigenvalue counting is well-known in the matrix case \cite{DiNapoliPolizziSaad16}; contour-filter constructions are the basis of filtered subspace iteration for self-adjoint operators as well \cite{GopalakrishnanGrubisicOvall20}. We therefore run the stochastic trace estimation algorithms with the goal of computing $\Tr(\A_{\epsilon, R, Q}) = \Tr(\rho(\mathcal{D}_{\epsilon, R}))$. The relation between the reported trace and the desired count is therefore
$$
    \Tr(\A_{\epsilon, R, Q})
        \approx \Tr(P_{\epsilon, R, \Gamma})
        \approx \Tr(P_\Gamma),
$$
where the first approximation is the quadrature/filtering error and the second is the finite-interval truncation error.

In order to apply stochastic trace estimation to $\A_{\epsilon, R, Q}$, we have to ensure that it is a positive trace-class operator. In numerical experiments below, we use the shifted trapezoidal rule on an ellipse. Let $Q$ be even. We write the ellipse in the form
$$
    z(\theta)
        = c + \frac{\eta}{2}
            \left(
                \tau e^{\ii \theta} + \tau^{-1}e^{-\ii \theta}
            \right),
    \quad \tau>1,
$$
where $c \in \R$ is the center and $\eta>0$. The real half-width
of the ellipse is $\gamma = \frac{\eta}{2}(\tau + \tau^{-1})$.
We use the shifted trapezoidal nodes
$$
    \theta_j = \frac{2\pi}{Q} \left( j + \frac{1}{2} \right),
    \quad 
    j = 1, \ldots, Q,
$$
and set the quadrature nodes and weights as
$$
    z_j = z(\theta_j),
    \quad
    w_j = \frac{z'(\theta_j)}{\ii Q}.
$$
The corresponding rational filter is
$
    \rho(x)
        = \sum_{j=1}^{Q} \frac{w_j}{z_j - x}.
$
The relation between trapezoidal quadrature on an ellipse and Chebyshev filters is standard in the FEAST literature; see
\cite[Lemma~3.1]{GuettelPolizziTangViaud15}. With the notation above, the filter can be written, for $x \in \R$, as
$$
    \rho(x)
        = \frac{\tau^Q - \tau^{-Q}}
            { \tau^Q+\tau^{-Q}
                + 2 T_Q \left( \frac{x - c}{\eta} \right) },
$$
where $T_Q$ is the Chebyshev polynomial of the first kind. Since $Q$ is even, $T_Q(t) \geq -1$ for every $t \in \R$. Hence
$$
    \tau^Q+\tau^{-Q} + 2T_Q(t)
        \geq \tau^Q+\tau^{-Q}-2
        > 0,
$$
and the numerator $\tau^Q-\tau^{-Q}$ is positive. Consequently,
$$
    \rho(x)>0, \quad \text{for all } x \in \R.
$$
By the spectral theorem, $\A_{\epsilon, R, Q} = \rho(\mathcal{D}_{\epsilon, R})$ is positive semidefinite.
Moreover, since $T_Q$ is a polynomial of degree $Q$, the same
representation gives
$$
    \rho(x) = \mathcal{O} (|x|^{-Q}), \quad |x| \to \infty.
$$
With the boundary conditions specified below, $\mathcal{D}_{\epsilon, R}$ is a regular first-order self-adjoint system on a bounded interval. Thus it has compact resolvent and its eigenvalues $\lambda_j(\mathcal{D}_{\epsilon, R})$ are growing linearly in magnitude. Therefore
$$
    \sum_j \rho(\lambda_j(\mathcal{D}_{\epsilon, R})) < \infty
$$
for $Q>1$. Hence $\A_{\epsilon, R, Q} = \rho(\mathcal{D}_{\epsilon, R})$ is a positive trace-class operator, and we can apply the stochastic trace estimation algorithms of Section \ref{sec:optrace}.

The exact projection $P_{\epsilon, R, \Gamma}$ is finite rank, while $\A_{\epsilon, R, Q}$ is generally not. Instead, $\A_{\epsilon, R, Q}$ is a positive trace-class rational approximation to this projection. Its trace has the spectral representation
$$
    \Tr(\A_{\epsilon, R, Q})
    = \sum_j \rho( \lambda_j( \mathcal{D}_{\epsilon, R} ) ).
$$
Eigenvalues of $\mathcal{D}_{\epsilon, R}$ well inside the contour contribute weights $\rho(\lambda_j)$ close to one, while eigenvalues well outside the contour contribute weights close to zero. Eigenvalues close to the contour may be only partially counted. Thus $\Tr(\A_{\epsilon, R, Q})$ is a smoothed quadrature-based approximation to the finite-interval contour count $\Tr(P_{\epsilon, R, \Gamma})$.

To demonstrate this in numerical experiments, we consider potentials consisting of several Gaussian wells, i.e., potentials of the form
$$
    V(r) = \lambda \cdot \sum_{i=1}^k e^{-(r - r_i)^2 / a^2}.
$$
This model describes elastic Dirac fermion scattering and the effective potential $V$ is determined from the density of the nuclear charge, which can be modeled as the sum of the Gaussian terms \cite{Valuev2020Skyrme}. We control the number of eigenvalues in $(-1, 1)$ by varying the coupling parameter $\lambda<0$ and the number of wells $k$.

For quadrature we use the shifted trapezoidal rule with $Q=64$ nodes on an ellipse with real-axis endpoints $-0.99$ and $0.99$, and vertical semiaxis $0.1$. This quadrature results in the filter $\rho$ taking the following values:
\begin{center}
    \begin{tabular}{c||c|c|c|c|c|c|c}
        $|x|$ & 0.9 & 0.98 & 0.99 & 1 & 1.01 & 1.05 & 1.1 \\ \hline
        $\rho(x)$ & 1.00e+00 & 9.96e-01 & 5.00e-01 & 9.08e-03 & 3.62e-04 & 5.87e-08 & 3.23e-11
    \end{tabular}
\end{center}

\noindent
Eigenvalues well inside $[-0.99, 0.99]$ therefore contribute approximately one to $\Tr(\rho(\mathcal{D}_{\epsilon, R}))$, while eigenvalues close to $\pm 1$ may be only partially counted or effectively ignored by the filter.

All random samples in this example are generated in the Hilbert space $L^2([\epsilon,R])^2$, with inner product
$
\langle u, v\rangle = \int_{\epsilon}^R u_1(r) \overline{v_1(r)} + u_2(r) \overline{v_2(r)} \,dr .
$
Note that the random samples are real-valued but complex arithmetic enters through the shifted resolvent solves.
The truncated endpoint $R$ is chosen separately for each potential and is reported below; $\epsilon$ was set to $10^{-2}$. We consider two different bases for infHutch++. First, we use finite element method (FEM) to solve shifted systems with the Dirac operator. To generate random samples, we take linear combinations of the orthonormalized FEM basis. More precisely, let $\varphi_1, \ldots, \varphi_{N}$ denote the FEM basis for the ``top'' function in the Dirac operator (in our case, these will be the P2 elements basis), and let $\psi_1, \ldots, \psi_N$ denote the FEM basis for the ``bottom'' function in the Dirac operator (in our case, these will be $\psi_j = \frac{1}{2}(\varphi'_j + \frac{\kappa}{r} \varphi_j)$, i.e., we employ kinetic balance). To get the orthonormal basis, we build the mass matrix (block diagonal matrix with blocks $[\langle \varphi_i, \varphi_j \rangle]_{i,j=1, \dots, N}$ and $[\langle \psi_i, \psi_j \rangle]_{i,j=1, \dots, N}$) and compute its Cholesky factor $\hat{R}$. Then the orthogonal basis consists of columns of the matrix
$$
    \mb{cccccc}
        \hat{\varphi}_1 & \ldots & \hat{\varphi}_N & 0 & \ldots & 0 \\
        0 & \ldots & 0 & \hat{\psi}_1 & \ldots & \hat{\psi}_N
    \me
    = 
    \mb{cccccc}
        \varphi_1 & \ldots & \varphi_N & 0 & \ldots & 0 \\
        0 & \ldots & 0 & \psi_1 & \ldots & \psi_N
    \me
    \hat{R}^{-1}.
$$
We then generate the random samples for infHutch++ as
$$
    x = 
        \mb{c}
            \sum_{i=1}^N \alpha_i \hat{\varphi}_i \\
            \sum_{i=1}^N \beta_i \hat{\psi}_i 
        \me,
    \quad
        \alpha_i, \beta_i \sim N(0, 1).
$$
Using $n$ nodes in $[\epsilon, R]$ and P2 elements, we have $N=2n-2$, and in total $4n-4$ basis functions once the boundary conditions are imposed.

The second way of generating random samples is to consider scaled Legendre polynomials $L_0, \ldots, L_n$ so that they are orthonormal on the interval $[\epsilon, R]$, and draw
$$
    x = 
        \mb{c}
            \sum_{i=0}^n \alpha_i L_i \\
            \sum_{i=0}^n \beta_i L_i 
        \me,
    \quad
        \alpha_i, \beta_i \sim N(0, 1).
$$
Then we simply use \chebfun to define the Dirac operator and the random samples, and use the backslash operator to solve with the resolvents as in the previous example.
For both the FEM and Legendre bases, homogeneous Dirichlet conditions were imposed for the ``bottom'' function at $r=\epsilon$, and for the ``top'' function at $r=R$ in all solves.

\begin{figure}[t]
    \begin{minipage}{0.50\textwidth}
        \includegraphics[width=1\linewidth]{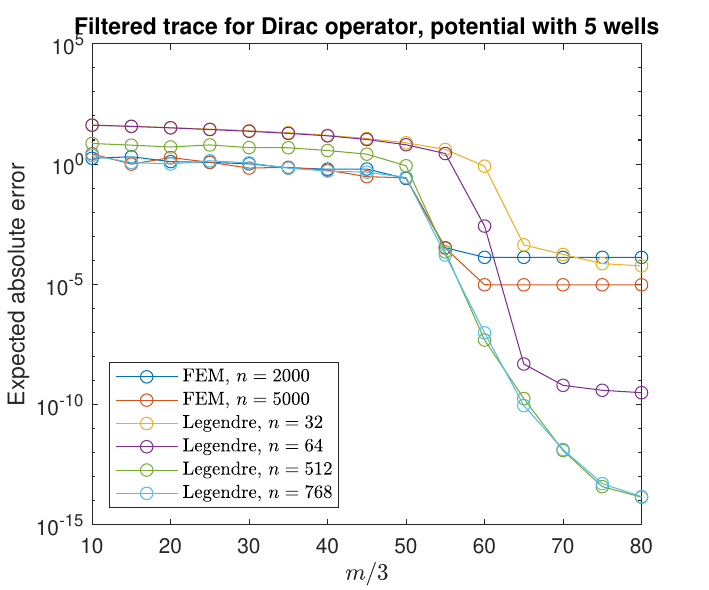}
    \end{minipage}
    \hfill
    \begin{minipage}{0.50\textwidth}
        \includegraphics[width=1\linewidth]{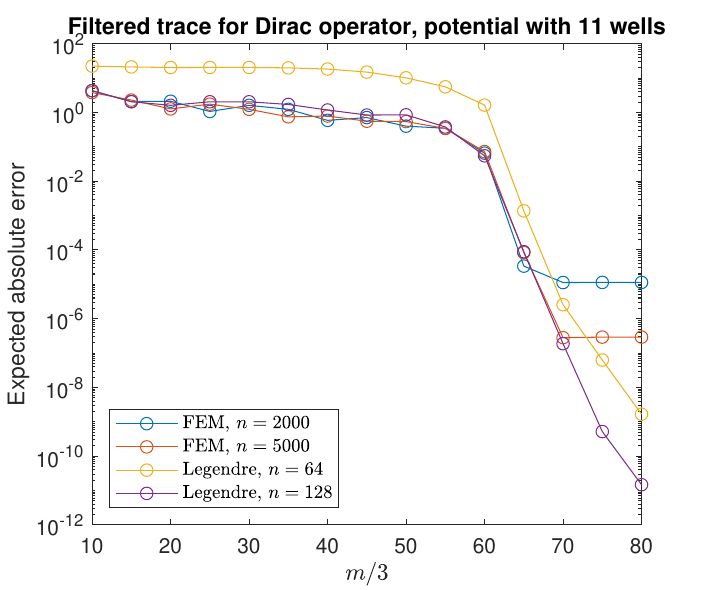}
    \end{minipage}
    \caption{Stochastic trace estimation of the filtered operator $\rho(\mathcal{D}_{\epsilon, R})$, using the truncated infHutch++ algorithm. Random samples are generated as linear combinations of orthonormalized FEM basis functions, and as linear combinations of Legendre polynomials. Averaged over 10 runs.}
    \label{fig:dirac-count}
\end{figure}

The results are shown in Figure \ref{fig:dirac-count}. 
For the left plot, we take the potential $V$ with $\lam=-10$, $a=5$, and $k=5$ wells centered at $r_i = 15i$. There are $\num_V = 52$ eigenvalues in the gap $(-1, 1)$; the smallest is $\lam_1 \approx -0.96773$, and the largest $\lam_{52} \approx 0.97845$. All gap eigenvalues of the Dirac operator\footnote{The reported gap eigenvalues were computed independently using $\texttt{eigs}$ in \chebfun and also the same kinetic-balance FEM discretization, with convergence checked under mesh refinement and enlargement of the interval $[\epsilon,R]$. Eigenvalues were reported only if they remained stable under these refinements.} lie well inside $[-0.99, 0.99]$.
Next we fix $\epsilon=10^{-2}$ and $R=120$ to truncate the domain, and use $\Tr(\rho(\mathcal{D}_{\epsilon, R})) \approx 52.006128204243$ as the reference value: using \chebfun we computed all eigenvalues of $\mathcal{D}_{\eps, R}$ in a wider interval $[-1.2, 1.2]$, applied the filter function $\rho$ to each of the eigenvalues, and summed all the obtained values\footnote{Enlarging the eigenvalue interval did not change the displayed digits of the reference value.}.
For FEM discretization we use $n=2000$ and $n=5000$ nodes, respectively, meaning that the random samples are linear combinations of $7996$ and $19996$ basis functions, respectively. We consider Legendre polynomials with degrees up to $n=32$, $n=64$, $n=512$, $n=768$, respectively, resulting in truncated infHutch++ bases of sizes $66$, $130$, $1026$, $1538$, respectively. For each of these bases, the infHutch++($n$) algorithm using $m/3=80$ samples always reports the same value in $52.0061 \pm 0.0002$ (the exact value depends on the sampling method). 
Note that both FEM bases and the Legendre basis with $n=768$ require only $m/3=10$ samples to approximate the filtered trace $\Tr(\rho(\mathcal{D}_{\epsilon, R}))$, which is close to the true gap count $\num_V=52$, with an average error less than $2$. The necessity of using a large polynomial degree in order to get more accurate approximations with smaller number of samples
may be related to the highly oscillatory nature of the relevant eigenfunctions.

In the right plot, we consider the potential with $\lam=-1$, $a=5$, and $k=11$ wells centered at $r_i = 15i-15$. There are $\num_V = 61$ eigenvalues in $(-1, 1)$; the smallest is $\lam_1 \approx 0.13484$, and the largest two $\lam_{60} \approx 0.98315$, $\lam_{61} \approx 0.99593$. The ellipse used here has real-axis endpoints $\pm 0.99$, so $\lambda_{61}$ lies outside the contour. Thus the contour count is $60$, although the full gap count is $61$; in turn, the reference value is
$\Tr(\rho(\mathcal{D}_{\epsilon, R})) \approx 60.024852949199$ (computed in the same way as for the left plot). Counting the last
eigenvalue would require choosing a contour closer to the spectral edge at $1$; resolving such a narrow transition region may require a significantly larger number of quadrature nodes. On the other hand, the eigenfunctions in this case are less oscillatory, and we only need $m/3=15$ random samples using Legendre polynomials of degree $n=128$ to approximate the filtered trace with an average error less than $2$. With $80$ samples, all four methods shown in the figure reported the value $60.02485 \pm 0.00001$. The domain was truncated with $\epsilon = 10^{-2}$ and $R=200$ in this example.

\section{Conclusion}
\label{sec:conclusion}

In this paper, we have introduced infinite-dimensional analogues of the Girard--Hutchinson and Hutch++ trace estimators for positive trace-class operators on separable Hilbert spaces. The idealized estimators use Gaussian random elements whose covariance is determined by the target operator.
This leads to unbiased operator versions of the Girard--Hutchinson and
Hutch++ estimators.
For these idealized algorithms we prove high-probability error bounds that
are direct infinite-dimensional counterparts of the finite-dimensional case;
in particular, infHutch++ has sample complexity
$\mathcal{O}(\eps^{-1})$.
In contrast to the externally prescribed squared-exponential covariance
used in ContHutch++, our covariance operators are determined by the target
operator itself. This avoids the associated smoothing bias in the idealized
estimators and provides a more direct analogue of the matrix setting.

Since the idealized sampling distributions are generally not available in practice, we have also propose truncated implementations based on sampling from finite-dimensional subspaces. For the Girard--Hutchinson estimator, this truncation can be interpreted as applying the matrix estimator to a finite-rank compression of the operator.
For infHutch++, the practical range finder restricts the randomness to a
finite-dimensional sampling space while keeping the images under the operator
in the ambient Hilbert space. For fixed sample budget, we have shown that the
resulting estimator converges in distribution to idealized infHutch++ as the
basis dimension tends to infinity, and that its expectation converges to
$\Tr(\A)$.

The numerical experiments illustrate practical advantages of this sampling
strategy in the \chebfun implementation.
In the integral-operator examples, truncated infHutch using Legendre bases reaches accuracy comparable to the idealized benchmark while requiring significantly lower-degree Chebyshev representations than the squared-exponential Gaussian-process samples used by ContHutch. In the density-of-states experiment, truncated infHutch++ achieves similar or better stochastic accuracy than ContHutch++ with substantially smaller internal \chebfun discretizations. The final experiment illustrates the use of truncated
infHutch++ for quadrature-based spectral filtering of a differential operator, with the
trace serving as an approximation to an eigenvalue count.

\section{Funding}
The work of ZB, LG, HO was supported by the following projects: Croatian Science Foundation grants IP-2025-02-3733 (``Data driven identification and dimension reduction of dynamical systems'') and IP-2022-10-5191 (``Optimal control and model reduction for evolution and data driven problems''), the project ``Implementation of cutting-edge research and its application as part of the Scientific Center of Excellence for Quantum and Complex Systems, and Representations of Lie Algebras'', Grant No.~PK.1.1.10.0004, co-financed by the European Union through the European Regional Development Fund - Competitiveness and Cohesion Programme 2021--2027, and by the European Union - NextGenerationEU through the institutional project Impact4MATH at the University of Zagreb, Faculty of Science.

\bibliographystyle{plain}
\bibliography{refs.bib}

\end{document}